\documentclass[11pt,reqno]{amsart}
\usepackage[margin=1in]{geometry}
\usepackage{amsmath,amssymb,amsfonts,amsthm,mathtools}
\usepackage{xcolor}
\usepackage[pagebackref=true,colorlinks=true,linkcolor=blue,citecolor=blue,urlcolor=blue]{hyperref}
\usepackage{cite}
\renewcommand*{\backref}[1]{}
\renewcommand*{\backrefalt}[4]{\ifcase#1\relax\or\space#2\else\space#2\fi}

\allowdisplaybreaks[2]
\numberwithin{equation}{section}

\newtheorem{theorem}{Theorem}[section]
\newtheorem{lemma}[theorem]{Lemma}
\newtheorem{proposition}[theorem]{Proposition}
\newtheorem{corollary}[theorem]{Corollary}
\newtheorem{remark}{Remark}[section]

\makeatletter
\def\subsection{\@startsection{subsection}{2}%
  \z@{.5\linespacing\@plus.7\linespacing}{.25\linespacing}%
  {\normalfont\bfseries}}
\makeatother

\newcommand{\R}{\mathbb R}
\newcommand{\T}{\mathbb T}
\newcommand{\Z}{\mathbb Z}
\newcommand{\dd}{\,\mathrm d}
\newcommand{\supp}{\operatorname{supp}}
\newcommand{\Thmref}[1]{\hyperref[#1]{\mbox{Theorem~\ref*{#1}}}}
\newcommand{\Lemref}[1]{\hyperref[#1]{\mbox{Lemma~\ref*{#1}}}}
\newcommand{\Propref}[1]{\hyperref[#1]{\mbox{Proposition~\ref*{#1}}}}
\newcommand{\Corref}[1]{\hyperref[#1]{\mbox{Corollary~\ref*{#1}}}}
\newcommand{\Remref}[1]{\hyperref[#1]{\mbox{Remark~\ref*{#1}}}}
\newcommand{\Secref}[1]{\hyperref[#1]{\mbox{Section~\ref*{#1}}}}
\newcommand{\Subsecref}[1]{\hyperref[#1]{\mbox{Subsection~\ref*{#1}}}}
\title[Sharp critical Strichartz estimates on the waveguide]{Sharp critical Strichartz estimates for a resonant hyperbolic Schr\"odinger flow on the 3D waveguide}

\author[X. Cen]{Xi Cen}
\address{Xi Cen, School of Science, China University of Mining and Technology-Beijing, Beijing 100083, People's Republic of China}
\email{xicenmath@gmail.com}

\author[Z. Zhang]{Zhezhi Zhang}
\address{Zhezhi Zhang, School of Mathematics and Information Science, Henan Polytechnic University, Jiaozuo 454003, People's Republic of China}
\email{zhangzhezhi@home.hpu.edu.cn}

\subjclass[2020]{42B37, 35B45, 35Q41, 35Q55}
\keywords{critical Strichartz estimate, 3D waveguide, elliptic and hyperbolic Schr\"odinger equations, Bloch transform, periodic null direction}

\begin{document}
\begin{abstract}
We study critical Strichartz estimates for the resonant hyperbolic Schr\"odinger flow
on the waveguide $\R\times\T^2$. For initial data localized to frequencies at most $N$, we prove that the unit-time $L^2\to L^{10/3}$ operator norm has sharp order $N^{1/5}$. We further track its dependence on the observation length, obtaining sharp estimates in the short- and long-time regimes and characterizing the frequency-dependent intervals on which the critical estimate remains lossless. The precise dependence on $T$ in the intermediate regime $N^{-1}<T<1$ remains open. The upper bounds are obtained by lifting the global endpoint estimate on the elliptic cylinder $\R\times\T$ to Hilbert-valued data, preserving its time summability, and combining this with periodic Bernstein and interpolation. The lower bounds exploit an exact integer null direction in the periodic frequency lattice together with a suitably scaled Euclidean wave packet. Finally, using a Bloch--Floquet transference argument, we recover the known elliptic unit-time estimate from compact torus theory and obtain a lossless elliptic estimate on frequency-dependent short time intervals.
\end{abstract}
\maketitle

\section{Introduction}\label{sec:intro}
\enlargethispage{3pt}
\subsection{Background and motivations}\label{sub:background}

We use the normalization $\T=\R/(2\pi\Z)$.  Let 
$
 \mathcal L_h:=\partial_x^2+\partial_{y_1}^2-\partial_{y_2}^2
$
be the hyperbolic Schr\"odinger operator on $\R\times\T^2$.  For
$f\in L^2(\R\times\T^2)$ we consider
\begin{equation}
 \begin{cases}
  (i\partial_t+\mathcal L_h)u=0,\\
  u|_{t=0}=f.
 \end{cases}
 \label{eq:main-hyperbolic-flow}
\end{equation}
The product $\R\times\T^2$ is a basic semiperiodic waveguide: the Euclidean
direction is genuinely dispersive, while the periodic directions allow
recurrence, coherent lattice packets, and arithmetic concentration.  Such
geometries arise naturally in partially confined dispersive systems
\cite{TzvetkovVisciglia2012,HaniPausader2014,DWWZ2025}.  Mixed-signature
Schr\"odinger operators also occur in anisotropic models from nonlinear optics
and water-wave modulation. See \cite{SautWang2024}.  Analytically, the space
$\R\times\T^2$ lies at a particularly delicate interface.  Its single
Euclidean direction provides only one source of long-time decay, whereas the
two periodic directions already support genuinely two-dimensional lattice
interactions.  The problem is therefore governed by a competition among
Euclidean dispersion, periodic confinement, and the arithmetic degeneracy of
an indefinite quadratic phase.

The diagonal spacetime exponent is dictated by three-dimensional
Schr\"odinger scaling.  In dimension $d$, the $L^2$-critical diagonal exponent
is
\[
 p_c=\frac{2(d+2)}{d},
\]
and hence $p_c=10/3$ for $d=3$.  Equivalently, the Euclidean frequency
exponent $d/2-(d+2)/p$ vanishes at $p=10/3$.  In Euclidean space this critical
scale is compatible with the lossless Strichartz estimate originating in
\cite{Strichartz1977} and the general dispersive framework of Keel--Tao
\cite{KeelTao1998}.  On compact spaces, by contrast, global dispersion is
absent and the same exponent becomes sensitive to lattice arithmetic.
Bourgain's periodic restriction theory \cite{Bourgain1993} initiated this
viewpoint. Bourgain--Demeter \cite{BourgainDemeter2015} obtained the critical
estimate up to the characteristic $N^\varepsilon$ loss through decoupling,
while Killip--Vi\c{s}an \cite{KillipVisan2016} established scale-invariant
estimates away from the endpoint.  Thus any polynomial frequency growth at
$p=10/3$ reflects concentration not predicted by Euclidean scaling alone.

The same geometric sensitivity persists on waveguides.  Takaoka--Tzvetkov
\cite{TakaokaTzvetkov2001} proved the sharp local $L^4$ estimate on
$\R\times\T$, and subsequent works such as Herr--Tataru--Tzvetkov
\cite{HTT2014} and Hani--Pausader \cite{HaniPausader2014} developed
higher-dimensional semiperiodic estimates and their nonlinear applications.
Barron \cite{Barron2021} later established a general global-in-time theory on
$\R^m\times\T^n$.  At the critical exponent, that endpoint theory permits an
arbitrarily small Sobolev loss. For $(m,n)=(1,2)$, frequency localization
therefore yields the familiar $N^\varepsilon$ bound at $p=10/3$.  Recent
bilinear and restricted-type results
\cite{DFYZZ2024,Deng2025,DWWZ2025,DDFZ2026} further show that critical
behavior depends sensitively on how the total dimension is divided between
Euclidean and periodic variables.  In particular, the one-Euclidean,
two-periodic geometry studied here is not a formal variant of either
$\R^2\times\T$ or the fully compact problem.

A second source of difficulty is the length of the observation interval.
Besides asking for the optimal frequency loss on a fixed interval, one may
ask for the longest frequency-dependent interval on which a lossless estimate
remains uniform.  Recent short-time results, including
\cite{Schippa2025,McConnell2026,Quinn2026,SkouloudisYu2026}, show that these
two formulations often reveal complementary aspects of the same endpoint
phenomenon.  For the present paper, the decisive global input is the sharp
cylinder estimate of Barron--Christ--Pausader \cite{BCP2021},
\begin{equation}
 \left(
 \sum_{\gamma\in\Z}
 \|e^{it\Delta_{\R\times\T}}f\|_
 {L^4([\gamma,\gamma+1]\times\R\times\T)}^8
 \right)^{1/8}
 \lesssim \|f\|_{L^2(\R\times\T)}.
 \label{eq:bg-bcp}
\end{equation}
The $\ell^8$ summability in \eqref{eq:bg-bcp}, rather than only its unit-time
consequence, records how the local cylinder norms accumulate over long
intervals.  This summability survives the Hilbert-valued lifting used below
and is what makes it possible to obtain a sharp long-time law instead of a
bound produced by naive iteration.  Long-time estimates on
three-dimensional elliptic waveguides have also been developed recently in
\cite{DDMYY2026}, underscoring that the time length is an independent
parameter in semiperiodic Strichartz theory.

The hyperbolic problem introduces a further arithmetic obstruction.  Wang
\cite{Wang2013} exhibited the role of difference-of-squares arithmetic on
$\T^2$, while restriction estimates for indefinite paraboloids reflect the
same distinction at the Fourier-analytic level \cite{DemeterWu2025}.  On the
cylinder $\R\times\T$, Ba\c{s}ako\u{g}lu--Sun--Tzvetkov--Wang
\cite{BSTW2025} obtained sharp estimates for $p>4$, and
Deng--Fan--Zhao \cite{DFZ2025} proved the lossless critical $L^4$ estimate.
Consequently, indefinite signature by itself does not force a derivative
loss.  In three compact dimensions, however, Liu--Zheng
\cite{LiuZheng2025} obtained sharp hyperbolic Strichartz
estimates with a genuine critical loss. At $p=10/3$ the sharp frequency
growth is $N^{1/5}$.  For the standard signature considered here, their
compact unit-time upper bound can also be transferred to $\R\times\T^2$ by
the Bloch--Floquet principle in \Propref{prop:transfer}.  Compact
theory alone, however, does not determine how the waveguide norm depends on
the observation length.  The principal issue here is therefore the
long-time growth law and the transition between lossless and lossy time
scales. Related estimates for symmetric hyperbolic flows in even compact
dimensions are proved in \cite{LiuZheng2026}.

The mechanism behind the lower bound is already visible in the periodic
lattice.  The dispersion relation
\[
 \omega(\xi,k_1,k_2)=\xi^2+k_1^2-k_2^2
\]
contains the exact integer null line
\begin{equation}
 \omega(0,j,j)=0,\qquad j\in\Z.
 \label{eq:bg-null-line}
\end{equation}
Along this line the two periodic phases cancel exactly, allowing a
Dirichlet-type packet to remain coherent.  The resulting stationary periodic
concentration gives the $N^{1/5}$ unit-time lower bound. When it is paired
with a slowly dispersing Euclidean packet, it also yields the matching
long-time lower bound.  This exact resonance is specific to the standard
coefficient choice in \eqref{eq:main-hyperbolic-flow} and need not persist for
a general indefinite diagonal form.

The upper bound is driven by the complementary dispersive structure.  We
regard $(x,y_1)$ as an elliptic cylinder, retain the $y_2$ Fourier variable as
an $\ell^2$ parameter, and lift \eqref{eq:bg-bcp} to Hilbert-valued data.
Periodic Bernstein in the remaining variable, followed by interpolation with
spacetime $L^2$, then produces the critical $L^{10/3}$ estimate with the
correct accumulation in time.  The sharp result thus reflects a balance
between two mechanisms: global cylinder dispersion controls the upper bound,
while the integer null direction supplies the concentration needed for
the lower bound.

The elliptic problem provides a useful benchmark.  Its unit-time
$N^\varepsilon$ estimate on $\R\times\T^2$ is already contained in Barron's
general endpoint theory. The elliptic theorem below is therefore included to present
an alternative Bloch--Floquet transfer proof from compact estimates rather
than as a new endpoint result.  For shorter windows we use Quinn's lossless
critical estimate on rectangular tori \cite[Theorem~1.1]{Quinn2026}.  In
spatial dimension three, Quinn's notation has $n=4$ and hence $q_c=10/3$. The condition $\delta\ge\lambda^{-1/5+\varepsilon}$ gives a lossless
interval of length $(\lambda\delta)^{-1}$, in particular of order
$\lambda^{-4/5-\varepsilon}$.  Fixed positive coefficients can be treated by
a constant spatial and time rescaling.  This elliptic comparison separates
the effects of semiperiodic geometry from the additional polynomial loss
created by the hyperbolic null lattice.

For the statements and proofs below, we use the diagonal notation
\begin{equation}
 L_a\coloneqq a_1\partial_x^2+a_2\partial_{y_1}^2+a_3\partial_{y_2}^2,
 \label{eq:operator}
\end{equation}
where the coefficients $a_j$ are fixed nonzero real numbers, and write
\[
 Q_a(\xi,k)\coloneqq a_1\xi^2+a_2k_1^2+a_3k_2^2.
\]
For the hyperbolic model considered above, we set
\begin{equation}
 L\coloneqq L_{(1,1,-1)}=\mathcal L_h.
 \label{eq:hyperbolic-normal}
\end{equation}
Thus $Q\coloneqq Q_{(1,1,-1)}$ agrees with the dispersion relation $\omega$
introduced above.

\subsection{Main results}\label{sec:main-results}

We fix an even $\chi\in C_c^\infty(\R)$ taking values in $[0,1]$, equal to one on $[-1,1]$, and supported in $[-2,2]$. For dyadic $N\ge2$, we define the frequency cutoff by
\[
 \widehat{P_{\le N}f}(\xi,k)
 \coloneqq\chi\bigl(N^{-1}\sqrt{\xi^2+|k|^2}\bigr)\widehat f(\xi,k).
\]
The plateau of $\chi$ will allow us to test the estimate on explicit initial data without changing those data under the cutoff.

For the fixed hyperbolic operator $L$ in \eqref{eq:hyperbolic-normal}, set
\[
 \mathcal S(N,T)\coloneqq
 \left\|e^{itL}P_{\le N}\right\|_{L^2(\R\times\T^2)\to
 L^{10/3}([0,T]\times\R\times\T^2)}.
\]

\begin{theorem}[Hyperbolic critical estimate]\label{thm:hyperbolic-main}
Let $L$ be the operator in \eqref{eq:hyperbolic-normal}. For every dyadic $N\ge2$,
\begin{equation}
 \left\|e^{itL}P_{\le N}\right\|_{L^2(\R\times\T^2)\to L^{10/3}([0,1]\times \R\times\T^2)}
 \eqsim N^{1/5}.
 \label{eq:hyperbolic-main}
\end{equation}
\end{theorem}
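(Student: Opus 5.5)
We prove the upper bound $\mathcal S(N,1)\lesssim N^{1/5}$ by treating $(x,y_1)$ as an elliptic cylinder and $y_2$ as a periodic parameter. We prove the matching lower bound by testing on data concentrated on the integer null line \eqref{eq:bg-null-line}.

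\emph{Upper bound.} Expand $f=\sum_{k_2\in\Z}e^{ik_2y_2}f_{k_2}(x,y_1)$. Then
\[
 e^{itL}f=\sum_{k_2}e^{ik_2y_2}e^{-itk_2^2}\,e^{it\Delta_{\R\times\T}}f_{k_2}.
\]
The first step is a Hilbert-valued version of \eqref{eq:bg-bcp}, at least in its unit-time form:
\[
 \Bigl\|\Bigl(\sum_{k_2}|e^{it\Delta_{\R\times\T}}f_{k_2}|^2\Bigr)^{1/2}\Bigr\|_{L^4([0,1]\times\R\times\T)}\lesssim\Bigl(\sum_{k_2}\|f_{k_2}\|_{L^2}^2\Bigr)^{1/2}=\|f\|_{L^2(\R\times\T^2)}.
\]
This follows from the Marcinkiewicz--Zygmund theorem: every bounded linear operator $L^2\to L^4$ admits an $\ell^2$-valued extension with comparable norm. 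The unimodular factors $e^{-itk_2^2}$ do not affect the pointwise $\ell^2$ norm.

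Next, for fixed $(t,x,y_1)$, the function $y_2\mapsto e^{itL}P_{\le N}f$ is a trigonometric polynomial with frequencies $|k_2|\le 2N$. Interpolating Plancherel ($L^2_{y_2}$) with the trivial $L^\infty_{y_2}\lesssim N^{1/2}\ell^2$ bound gives
\[
 \|\cdot\|_{L^4_{y_2}}\lesssim N^{1/4}\,\|(\text{coefficients})\|_{\ell^2_{k_2}}.
\]
Combined with the lifted estimate, this yields
\[
 \|e^{itL}P_{\le N}f\|_{L^4([0,1]\times\R\times\T^2)}\lesssim N^{1/4}\|f\|_{L^2}.
\]
Unitarity gives the trivial bound $\|e^{itL}P_{\le N}f\|_{L^2([0,1]\times\R\times\T^2)}\le\|f\|_{L^2}$. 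Hölder interpolation with $\tfrac{3}{10}=\tfrac{\theta}{2}+\tfrac{1-\theta}{4}$, i.e.\ $\theta=\tfrac15$, gives the $L^{10/3}$ bound $N^{\frac14\cdot\frac45}=N^{1/5}$.

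\emph{Lower bound.} Let $\psi$ be a fixed Schwartz function on $\R$ with $\widehat\psi$ supported in $[-1,1]$. Take
\[
 f(x,y)=\psi(x)\,D_N(y_1+y_2),\qquad D_N(s)=\sum_{|j|\le N/4}e^{ijs}.
\]
The Fourier support of $f$ satisfies $\sqrt{\xi^2+|k|^2}\le 1+\sqrt2N/4<N$, so $P_{\le N}f=f$ by the plateau of $\chi$. Since $Q(\xi,j,j)=\xi^2$, the periodic phases cancel exactly and
\[
 e^{itL}f=(e^{it\partial_x^2}\psi)(x)\,D_N(y_1+y_2).
\]
We estimate each factor separately.
\begin{itemize}
\item For $t\in[0,1]$, the Euclidean factor has $L^{10/3}_x$ norm bounded below uniformly, by continuity of the free flow on Schwartz data.
\item The change of variables $(y_1,y_2)\mapsto(y_1+y_2,y_2)$ on $\T^2$ reduces the periodic factor to $\|D_N\|_{L^{10/3}(\T)}\eqsim N^{1-3/10}=N^{7/10}$. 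This follows from $|D_N|\gtrsim N$ on an interval of length $\sim N^{-1}$, together with the standard upper bound.
\item $\|f\|_{L^2}\eqsim\|D_N\|_{L^2(\T)}\eqsim N^{1/2}$.
\end{itemize}
The ratio is therefore $N^{7/10-1/2}=N^{1/5}$.

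The main obstacle is the Hilbert-valued lift of \eqref{eq:bg-bcp}. The Marcinkiewicz--Zygmund route must be checked to apply to the $L^2\to L^4$ map (it does, for the unit-time version used here). If it did not, I would instead apply \eqref{eq:bg-bcp} to the randomized combination $\sum_{k_2}\epsilon_{k_2}f_{k_2}$ and average using Khintchine's inequality, which gives the same square-function bound.
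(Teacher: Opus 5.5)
Your proposal is correct and follows essentially the same route as the paper. The upper bound expands in $y_2$, lifts the cylinder $L^4$ estimate to $\ell^2$-valued data, applies periodic Bernstein in $y_2$, and interpolates with spacetime $L^2$; the lower bound tensors a Dirichlet packet on the null line $k_1=k_2$ with a fixed Euclidean packet. The only difference is that you justify the Hilbert-valued lift by Marcinkiewicz--Zygmund (or Khintchine) in unit-time form. The paper instead uses an elementary Minkowski argument in $L^2_{t,x,y_1}$ that also preserves the $\ell^8$ time summation, which it needs later for the long-time corollary.
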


The upper bound holds for every initial datum. The reverse inequality means that, for each $N$, we can choose a normalized datum whose spacetime norm is at least $cN^{1/5}$. The constants in both directions are independent of $N$.

\begin{remark}\label{rem:symmetry}
We may fix the signs as in \eqref{eq:hyperbolic-normal}. Indeed, let $(Rf)(x,y_1,y_2)\coloneqq f(x,y_2,y_1)$ and let $L'\coloneqq\partial_x^2-\partial_{y_1}^2+\partial_{y_2}^2$. The map $R$ is an isometry on every spatial $L^p$ space and on the corresponding spacetime spaces. It commutes with the radial cutoff. On Fourier modes we have
\[
 R^{-1}LR=L'\qquad\text{and}\qquad
 e^{itL'}P_{\le N}=R^{-1}e^{itL}P_{\le N}R.
\]
Taking operator norms proves that the two orderings give identical estimates, including the lower bounds and the time-scale characterization.
\end{remark}

The proof of \Thmref{thm:hyperbolic-main} keeps track of the interval length. Together with the local estimate in \Secref{sec:time}, it gives the following conclusions.

\begin{corollary}\label{cor:hyp-time}
For the operator $L$ in \eqref{eq:hyperbolic-normal}, we have
\begin{align}
 \mathcal S(N,T)&\eqsim (N^2T)^{3/10}
 &&\text{if }0<T\le N^{-2},\label{eq:cor-hyp-very-short}\\
 \mathcal S(N,T)&\eqsim 1
 &&\text{if }N^{-2}\le T\le N^{-1},\label{eq:cor-hyp-short}\\
 \mathcal S(N,T)&\eqsim(NT)^{1/5}
 &&\text{if }T\ge1.\label{eq:cor-hyp-long}
\end{align}
In the intermediate regime $N^{-1}<T<1$, we have
\begin{equation}
 (NT)^{1/5}\lesssim\mathcal S(N,T)
 \lesssim\min\{(NT)^{3/10},T^{1/10}N^{1/5}\}.
 \label{eq:cor-hyp-intermediate}
\end{equation}
The sharp dependence on $T$ in this intermediate regime remains open.

For each fixed $T_0>0$, the operator norm on $[0,T_0]$ has order $N^{1/5}$,
with constants allowed to depend on $T_0$. Moreover, a family
$T_N\in(0,1]$ admits a bound uniform in $N$ precisely when
\begin{equation}
 \sup_N\mathcal S(N,T_N)<\infty
 \quad\Longleftrightarrow\quad\sup_NNT_N<\infty.
 \label{eq:cor-hyp-lossless}
\end{equation}
\end{corollary}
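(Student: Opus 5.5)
The plan is to treat upper and lower bounds separately. Each upper bound comes from one of three inputs: Bernstein, the local lossless estimate of \Secref{sec:time}, or the Hilbert-valued lift of \eqref{eq:bg-bcp}. All lower bounds except the very-short-time one come from the single family of null-line test data used for \Thmref{thm:hyperbolic-main}, with the Euclidean packet width tuned to $T$.

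\emph{Upper bounds.} We view $(x,y_1)$ as the elliptic cylinder and keep $k_2$ as an $\ell^2$ parameter. Since $e^{-itk_2^2}$ is unimodular, the Hilbert-valued version of \eqref{eq:bg-bcp} gives a sequence $(a_\gamma)$ with $\|(a_\gamma)\|_{\ell^8}\lesssim\|f\|_2$ and $\|u\|_{L^4_{t,x,y_1}L^2_{y_2}([\gamma,\gamma+1])}\le a_\gamma$. Bernstein on $\T$ in $y_2$ at frequency $\lesssim N$ then gives $\|u\|_{L^4([\gamma,\gamma+1]\times\R\times\T^2)}\lesssim N^{1/4}a_\gamma$. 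For $T\ge1$, Hölder in $\gamma$ yields $\|u\|_{L^4([0,T])}\lesssim N^{1/4}T^{1/8}\|f\|_2$. We interpolate with $\|u\|_{L^2([0,T])}=T^{1/2}\|f\|_2$, using $\tfrac{3}{10}=\tfrac{4/5}{4}+\tfrac{1/5}{2}$, which gives
\[
\mathcal S(N,T)\lesssim (N^{1/4}T^{1/8})^{4/5}(T^{1/2})^{1/5}=(NT)^{1/5}.
\]
For $T<1$, the same interpolation with $\|u\|_{L^4([0,T])}\lesssim N^{1/4}$ gives $T^{1/10}N^{1/5}$.

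For $T\le N^{-1}$ we use the local estimate of \Secref{sec:time}, which we expect to come from a $|t|^{-3/2}$ dispersive bound for the kernel of $e^{itL}P_{\le N}$ (via Poisson summation, since speed $N$ times time $N^{-1}$ is $O(1)$) plus Keel--Tao; this gives $\mathcal S\lesssim1$. For $T\le N^{-2}$, Bernstein $L^2\to L^{10/3}$ costs $N^{3/5}$ at each time, and Hölder in time gives $(N^2T)^{3/10}$. For $N^{-1}<T<1$, we split $[0,T]$ into $\sim NT$ intervals of length $N^{-1}$. Each contributes at most $\|f\|_2^{10/3}$ by unitarity and the short-time bound, so summing gives $(NT)^{3/10}$.

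\emph{Lower bounds.} For $T\le N^{-2}$ we take an $L^2$-normalized bump of width $N^{-1}$ at frequency $\lesssim N$. It stays essentially unchanged for times $\lesssim N^{-2}$, giving $\gtrsim N^{3/5}T^{3/10}$, and at $T=N^{-2}$ this gives $\gtrsim1$. Monotonicity of $\mathcal S$ in $T$ then gives $\gtrsim1$ throughout $N^{-2}\le T\le N^{-1}$. For all $T\ge N^{-2}$ we take
\[
f=\phi_R(x)\sum_{|j|\le cN}e^{ij(y_1+y_2)},
\]
with $\phi_R$ a normalized Gaussian of width $R=T^{1/2}$. The condition $R\ge N^{-1}$ keeps the data inside the plateau of $\chi$.
\begin{itemize}
\item \emph{Periodic factor.} By \eqref{eq:bg-null-line} it is exactly stationary. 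Its ratio $\|\cdot\|_{L^{10/3}(\T^2)}/\|\cdot\|_{L^2(\T^2)}\sim N^{7/10}/N^{1/2}=N^{1/5}$, since it has size $N$ on a strip of width $1/N$.
\item \emph{Euclidean factor.} It stays coherent up to time $R^2=T$, with $L^{10/3}/L^2$ ratio $\sim R^{-1/5}$.
\item \emph{Time.} Integrating over $[0,T]$ contributes $T^{3/10}$.
\end{itemize}
Together this gives $\mathcal S\gtrsim N^{1/5}T^{-1/10}T^{3/10}=(NT)^{1/5}$. This settles \eqref{eq:cor-hyp-long} and the lower half of \eqref{eq:cor-hyp-intermediate}.

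\emph{Consequences.} For fixed $T_0$, the bounds above give $N^{1/5}$ with $T_0$-dependent constants. For \eqref{eq:cor-hyp-lossless}: if $NT_N\le C$, we cover $[0,T_N]$ by $O(C)$ intervals of length $N^{-1}$ and apply the short-time bound. Conversely, $\mathcal S(N,T_N)\gtrsim (NT_N)^{1/5}$ forces $\sup_N NT_N<\infty$.

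The step I expect to be the main obstacle is the Hilbert-valued lifting of \eqref{eq:bg-bcp} while retaining the $\ell^8$ summability. It needs a Marcinkiewicz--Zygmund type argument or a direct check that the Barron--Christ--Pausader proof is linear-algebraic in the data, because $L^4$ is not Hilbert. If a direct lift fails, I would first try Khintchine with random signs on the $k_2$ modes, which commutes with the $\ell^8_\gamma L^4$ norm.
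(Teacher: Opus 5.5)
Your proposal is the paper's proof essentially step for step, and the exponent bookkeeping is correct. The upper bounds use the Hilbert-valued lift of \eqref{eq:bg-bcp} with Bernstein in $y_2$, H\"older in $\gamma$ and interpolation with $L^2$; then Bernstein, the local $TT^*$ bound, and a partition into intervals of length $N^{-1}$. The lower bounds use a coherent box packet with monotonicity, and a null-line packet of Euclidean width $\sim T^{1/2}$.

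Three small points.

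First, the step you flag as the main obstacle is elementary, because both exponents $4$ and $8$ are at least $2$. For $F=\sum_j f_je_j$ with $(e_j)$ orthonormal, one has
\[
 \|VF\|_{L^4(I_\gamma,\mathcal H)}^2=\Bigl\|\sum_j|Vf_j|^2\Bigr\|_{L^2(I_\gamma\times\R\times\T)}\le\sum_j\|Vf_j\|_{L^4(I_\gamma\times\R\times\T)}^2 .
\]
Minkowski in $\ell^4_\gamma$ followed by \eqref{eq:bg-bcp} then gives $\bigl(\sum_\gamma\|VF\|_{L^4(I_\gamma,\mathcal H)}^8\bigr)^{1/4}\lesssim\sum_j\|f_j\|_2^2$. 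This is \Lemref{lem:hilbert}. No Marcinkiewicz--Zygmund argument or inspection of the Barron--Christ--Pausader proof is needed; your Khintchine--Kahane route would also work, with more effort.

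Second, a Gaussian is not fixed by $P_{\le N}$, so the claim that $R\ge N^{-1}$ keeps the data inside the plateau of $\chi$ is false as written. Use instead a profile with $\widehat\phi\in C_c^\infty((-1,1))$ rescaled to frequency width $10^{-2}T^{-1/2}$, together with $|j|\le cN$ for small $c$, as in \Lemref{lem:null-packet}. The coherence computation is unchanged.

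Third, in the converse direction of \eqref{eq:cor-hyp-lossless}, the null-packet bound is only available for $T_N\ge N^{-2}$. The remaining case is trivial, since then $NT_N<1$.
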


\begin{remark}\label{rem:contribution}
The sharp long-time behavior in \Corref{cor:hyp-time} comes from a
balance between global cylinder dispersion and periodic null concentration.
For the upper bound, the global elliptic cylinder estimate is lifted to
Hilbert-valued data in a way that preserves its $\ell^8$ time summation. After
Bernstein in the remaining periodic direction and interpolation with $L^2$,
this yields the factor $(NT)^{1/5}$.  For the lower bound, a stationary
periodic null packet is paired with a Euclidean packet of frequency width $T^{-1/2}$,
which attains the same order for $T\ge1$ and also gives the obstruction to
extending the uniform lossless scale beyond $N^{-1}$.

The global time summability is essential.  Applying a sharp unit-time estimate
independently on consecutive intervals would only produce the larger time
factor $T^{3/10}$.  For comparison, the unit-time upper bound can also be
recovered from the compact hyperbolic estimate of Liu--Zheng
\cite{LiuZheng2025} via \Propref{prop:transfer}. The cylinder argument
used here contains the additional global information needed for the sharp
$T^{1/5}$ growth.
\end{remark}

\Thmref{thm:hyperbolic-main} addresses the critical estimate for the
specific hyperbolic model \eqref{eq:hyperbolic-normal} within the
higher-dimensional waveguide question discussed in \cite[Section~5]{DFZ2025}.
The global hyperbolic cylinder estimate proposed in
\cite[Remark~1.2]{DFZ2025} concerns a different flow and is not used here.

\begin{theorem}[Elliptic comparison]\label{thm:elliptic-main}
Suppose that $a_1,a_2,a_3$ have the same sign. For every $\varepsilon>0$ and every dyadic $N\ge2$,
\begin{equation}
 \left\|e^{itL_a}P_{\le N}f\right\|_{L^{10/3}([0,1]\times \R\times\T^2)}
 \lesssim_{a,\varepsilon}N^\varepsilon\|f\|_{L^2(\R\times\T^2)}.
 \label{eq:elliptic-main}
\end{equation}
\end{theorem}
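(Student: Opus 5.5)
The plan is to reduce \Thmref{thm:elliptic-main} to the compact critical estimate on a three-dimensional torus through the Bloch--Floquet transference principle of \Propref{prop:transfer}. The first step removes the coefficients. Since all $a_j$ have the same sign, replacing $t$ by $-t$ if necessary (which only reflects the time interval, since the $L^{10/3}$ norm of the conjugate flow is unchanged) reduces to $a_j>0$. A constant rescaling of $x$ then normalizes $a_1=1$; this preserves $L^2\to L^{10/3}$ norms up to constants depending on $a$ and changes the frequency cutoff only by a fixed factor. The coefficients $a_2,a_3$ on $\T^2$ cannot be rescaled away without changing the period. I would therefore keep them and work on rectangular (or irrational) tori, where decoupling still applies.

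The second step is the Bloch decomposition in $x$. I would write $\xi=m+\theta$ with $m\in\Z$ and $\theta\in[0,1)$, so that
\[
f=\int_0^1 e^{i\theta x}F_\theta(x,y)\,d\theta,
\]
with $F_\theta$ being $2\pi$-periodic in $x$ and having Fourier coefficients $\widehat f(m+\theta,k)$. Then $e^{itL_a}f$ becomes the superposition of $e^{i\theta x}$ times a torus flow with the shifted symbol $a_1(m+\theta)^2+a_2k_1^2+a_3k_2^2$. Completing the square, this symbol equals $a_1m^2+a_2k_1^2+a_3k_2^2$ plus a Galilean transformation term, $2a_1\theta m$ together with a constant. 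The Galilean term is absorbed by the substitution $x\mapsto x+2a_1\theta t$, which preserves $L^p$ norms on $[0,1]\times\T^3$.

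The third step recovers the periodic $x$ variable. I would estimate $\|\cdot\|_{L^{10/3}([0,1]\times\R\times\T^2)}$ by summing over unit cells $x\in[2\pi j,2\pi(j+1))$, which makes the function periodic in $x$ after the modulation by $\theta$. For the cell-wise estimate I would apply Minkowski's inequality in $\theta$, together with the decoupling bound
\[
\|e^{it\Delta_{\T^3_a}}P_{\le N}g\|_{L^{10/3}([0,1]\times\T^3)}\lesssim_\varepsilon N^{\varepsilon}\|g\|_{L^2}
\]
from \cite{BourgainDemeter2015}. I am presuming that this is exactly the form in which \Propref{prop:transfer} packages the argument: a unit-time torus estimate, uniform over Galilean shifts, transfers to $\R\times\T^2$ with the same constant. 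On that reading, the proof is to verify the hypothesis with the $N^\varepsilon$ bound and invoke the proposition.

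The main obstacle is the cost of Minkowski in $\theta$. Integrating the torus bound over $\theta$ naively yields an $L^1_\theta$ norm, not $L^2_\theta$, so it is too lossy to recover $\|f\|_{L^2}$. I would first try to rely on the transfer proposition, which presumably handles this by an almost-orthogonality argument. The alternative I would try is to localize $\widehat f$ in $\xi$ to unit intervals and use the local constancy of the $x$-dependence on unit scales, so that the time interval $[0,1]$ corresponds to an $x$-spread of size $\lesssim N$. That would let me periodize at period $\sim N$ and work on a torus of side $N$ in $x$, which is a rescaled irrational torus, with decoupling still giving $N^{\varepsilon}$. Because of this possible period-$N$ periodization, I expect the decoupling bound must be uniform over torus aspect ratios and not only over the Galilean shifts.
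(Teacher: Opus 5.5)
If you take \Propref{prop:transfer} as given, your argument is the paper's proof of the theorem. You reduce to $a_j>0$ by complex conjugation; the paper writes $e^{itL_a}P_{\le N}f=\overline{e^{itL_{-a}}P_{\le N}\overline f}$, using that the cutoff is real and even. You then take the Bourgain--Demeter bound for the fixed positive form $\sum_j a_jk_j^2$ on $[0,1]\times\T^3$ with Fourier support in $\{|k|\le3N\}$ (\Lemref{lem:compact}; no rescaling of $x$ is needed) and feed it into the transfer. Your guess about the proposition is right in substance. Its only hypothesis is a torus bound with constant $C(N,I)$, and the conclusion on $\R\times\T^2$ has the same constant. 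No uniformity over Galilean shifts has to be assumed: $w_\theta(t,x,y)=e^{-ita_1\theta^2}(e^{itL_a}g_\theta)(x-2ta_1\theta,y)$ is a time-dependent translate of a torus solution, and $\widehat g_\theta(n,k)=\widehat f(n+\theta,k)$ is supported in $|(n,k)|\le2N+1\le3N$.

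The gap is the transfer mechanism itself. You name it as the main obstacle and leave it to a presumed almost-orthogonality argument, and the route you actually describe fails. Applying Minkowski in $\theta$ to $u=\int_0^1e^{ix\theta}w_\theta\dd\theta$ bounds the $L^r$ norm on each unit cell by $\int_0^1\|w_\theta\|_{L^r(I\times\T^3)}\dd\theta$, uniformly in the cell index, so nothing can be summed over cells. The missing idea is to square before using Minkowski. With $u=e^{itL_a}f$, Parseval in $\theta$ gives the exact identity $\sum_j|u(t,x+2\pi j,y)|^2=\int_0^1|w_\theta(t,x,y)|^2\dd\theta$. For $r=10/3\ge2$, the inclusion $\ell^2\subseteq\ell^r$ over cells, followed by Minkowski in $L^{r/2}$ (valid since $r/2\ge1$), gives
\[
 \|u\|_{L^r(I\times\R\times\T^2)}^2
 \le\Bigl\|\sum_j|u(t,x+2\pi j,y)|^2\Bigr\|_{L^{r/2}(I\times\T^3)}
 =\Bigl\|\int_0^1|w_\theta|^2\dd\theta\Bigr\|_{L^{r/2}(I\times\T^3)}
 \le\int_0^1\|w_\theta\|_{L^r(I\times\T^3)}^2\dd\theta .
\]
This is an $L^2_\theta$ quantity. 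The torus bound and the Bloch isometry $\int_0^1\|g_\theta\|_2^2\dd\theta=\|f\|_2^2$ then close the argument with the same constant; the orthogonality is exact, not approximate. Your fallback of periodizing at period $\sim N$ is unnecessary. It would also need the compact estimate uniformly over tori of aspect ratio $\sim N$, which the fixed-coefficient input \Lemref{lem:compact} does not provide.
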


\begin{remark}\label{rem:elliptic-known}
The estimate in \Thmref{thm:elliptic-main} is not new.  Up to the
standard rescaling to the corresponding rectangular torus, it is contained in
Barron's general endpoint Strichartz theory for semiperiodic Schr\"odinger
equations. See \cite[Theorem~1]{Barron2021}, specialized to
$\R\times\T^2$ at the critical exponent $p=10/3$.  We include the theorem
because the proof given below is different: it is obtained by a
Bloch--Floquet transfer from compact Strichartz estimates.  This formulation
also provides a convenient comparison with the hyperbolic problem and with the
short-time elliptic estimates considered later.
\end{remark}

The same transfer principle also gives a lossless estimate on a longer
frequency-dependent time window in the elliptic case. Here $\mathcal S_a(N,T)$
denotes the same operator norm as $\mathcal S(N,T)$, with $L$ replaced by $L_a$.

\begin{corollary}[Elliptic short-window estimate]\label{cor:elliptic-short}
Suppose that $a_1,a_2,a_3$ have the same sign and $0<\eta<1/5$. Then
\begin{equation}
 \mathcal S_a(N,T)\eqsim_{a,\eta}\min\{(N^2T)^{3/10},1\}
 \qquad\text{if }0<T\le N^{-4/5-\eta}.
 \label{eq:elliptic-short}
\end{equation}
\end{corollary}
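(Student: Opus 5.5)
We prove the two sides of \eqref{eq:elliptic-short} separately, after reducing to $a_1,a_2,a_3>0$ by complex conjugation, since $\overline{e^{itL_a}f}=e^{-itL_a}\bar f$ and the cutoff is real and even. The two regimes $T\le N^{-2}$ and $N^{-2}\le T\le N^{-4/5-\eta}$ are treated in parallel.

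\textbf{Upper bound, $T\le N^{-2}$.} Here the time window is shorter than the natural oscillation scale, so no dispersion is needed. For each fixed $t\in[0,T]$ we apply Bernstein on $\R\times\T^2$ for frequencies $\lesssim N$ (Euclidean Bernstein in $x$, lattice Bernstein in $y$; the constant is uniform since $|\{k\in\Z^2:|k|\le 2N\}|\lesssim N^2$) to get
\[
\|e^{itL_a}P_{\le N}f\|_{L^{10/3}_{x,y}}\lesssim N^{3(1/2-3/10)}\|f\|_{L^2}=N^{3/5}\|f\|_{L^2}.
\]
Integrating over $[0,T]$ gives the factor $T^{3/10}N^{3/5}=(N^2T)^{3/10}$.

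\textbf{Upper bound, $N^{-2}\le T\le N^{-4/5-\eta}$.} We use the Bloch--Floquet transfer principle (\Propref{prop:transfer}) to reduce to a rectangular torus of large period $\lambda$ in the $x$-direction. Concretely, we decompose the $\xi$-frequency into fibres $\xi\in\theta+\lambda^{-1}\Z$, which gives quasi-periodic data on $(\lambda\T)\times\T^2$. After rescaling to unit-size tori, the torus frequency is comparable to $\lambda N$.

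We then invoke Quinn's lossless critical estimate \cite[Theorem~1.1]{Quinn2026} with parameters matched so that the admissible window $(\lambda\delta)^{-1}$ covers $T$ at frequency $\sim N$. As stated in the introduction, the condition $\delta\ge\lambda^{-1/5+\varepsilon}$ allows windows up to $\lambda^{-4/5-\varepsilon}$ in Quinn's frequency variable. Choosing $\varepsilon$ in terms of $\eta$ (say $\varepsilon=\eta/2$ after the scaling bookkeeping) makes $T\le N^{-4/5-\eta}$ admissible.

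Because the transfer principle averages the fibre estimates over $\theta$ in $L^2$, and Minkowski applies since $10/3\ge 2$, the torus bound with uniform constant passes to $\R\times\T^2$ without loss. The main obstacle is this scaling bookkeeping. One has to check that the Bloch period can be taken large enough that the transferred problem is genuinely of torus type, that the twist $\theta$ (which modifies the symbol by a linear phase) is harmless, and that the coefficients $a$ fit Quinn's rectangular-torus hypothesis. Nonresonance is not required, since Quinn's estimate holds for general rectangular tori. I would try first the time rescaling $t\mapsto t/a_1$ combined with a dilation of each periodic variable, absorbing the linear twist via Galilean invariance on each fibre.

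\textbf{Lower bounds.} Both regimes use the same datum. Take $f$ with $\widehat f(\xi,k)=\mathbf 1_{k=0}\,\phi(\xi/N)$, where $\phi$ is a fixed bump supported in the plateau of $\chi$, so that $P_{\le N}f=f$. This is a Euclidean packet of width $N^{-1}$ in $x$, constant in $y$, and $\|f\|_{L^2}\sim N^{1/2}$.

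For $|t|\le cN^{-2}$ there is no dispersion, so $|e^{itL_a}f|\gtrsim N$ on the set $|x|\lesssim N^{-1}$. This gives
\[
\|e^{itL_a}f\|_{L^{10/3}([0,T])}\gtrsim N\,(N^{-1}\min\{T,N^{-2}\})^{3/10}.
\]
Dividing by $N^{1/2}$, the ratio is $(N^2T)^{3/10}$ for $T\le N^{-2}$ and $\gtrsim 1$ otherwise, which matches $\min\{(N^2T)^{3/10},1\}$.
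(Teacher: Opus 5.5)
\textbf{Genuine gap: the lower bound.} Your datum $\widehat f(\xi,k)=\mathbf 1_{k=0}\,\phi(\xi/N)$ is constant in $y$, so it concentrates only in the Euclidean variable. The final division is also miscomputed. Set $\tau\coloneqq\min\{T,cN^{-2}\}$. Your own estimate gives
$\|e^{itL_a}f\|_{L^{10/3}([0,T]\times\R\times\T^2)}\gtrsim N(N^{-1}\tau)^{3/10}=N^{7/10}\tau^{3/10}$.
Dividing by $\|f\|_2\eqsim N^{1/2}$ leaves $N^{1/5}\tau^{3/10}$, not $N^{3/5}\tau^{3/10}=(N^2\tau)^{3/10}$. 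You lose exactly the factor $N^{2/5}$ that Bernstein charges in the two periodic directions. For $N^{-2}\le T\le N^{-4/5-\eta}$ your ratio is $\eqsim N^{-2/5}$, so it gives nothing toward $\mathcal S_a(N,T)\gtrsim1$.

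The fix is a packet that is coherent in all three variables, as in \eqref{eq:coherent-lower}. For $N\ge16$ take $M=N/16$, $\widehat g_M=M^{-1/2}\phi(\cdot/M)$, $h_M(y)=(2\pi(2M+1))^{-1/2}\sum_{|k|\le M}e^{iky}$, and $F_M=g_M(x)h_M(y_1)h_M(y_2)$. Then $\|F_M\|_2=1$ and $P_{\le N}F_M=F_M$. Fix a small constant $b_a$. For $0\le t\le b_aM^{-2}$ and $|x|,|y_1|,|y_2|\le b_aM^{-1}$, all phases stay below $\pi/3$, so $|e^{itL_a}F_M|\gtrsim M^{3/2}$ on a box of volume $\eqsim M^{-3}$. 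Hence
$\|e^{itL_a}F_M\|_{L^{10/3}([0,T])}^{10/3}\gtrsim M^5\cdot M^{-3}\min\{T,b_aM^{-2}\}\gtrsim_a\min\{N^2T,1\}$,
which covers both subregimes. The finitely many $N<16$ are handled by a fixed low-frequency packet.

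\textbf{Upper bound: right strategy, confused Bloch step.} The strategy matches the paper's: Bernstein for $T\le N^{-2}$, then \Propref{prop:transfer} combined with Quinn's theorem. The Bloch bookkeeping you describe should be discarded rather than repaired. No large period is needed, since the fibre problem is an exact torus problem for any period. \Propref{prop:transfer} uses period $2\pi$ and writes $\xi=n+\theta$ with $\theta\in[0,1)$. Each fibre is then a datum on the fixed torus $\T^3$ with frequencies $|(n,k)|\le2N+1\le3N$, and the twist is removed by the translation $x\mapsto x-2ta_1\theta$ and a unimodular factor.

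A period $\lambda$ growing with $N$ would make the fibre torus depend on $N$. Quinn's theorem is applied on a fixed rectangular torus and gives no uniformity of its constant in the torus shape. You also use the same letter $\lambda$ for the Bloch period and for Quinn's spectral parameter, which is why your frequency count ``$\lambda N$'' does not match the window you want.

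The conversion should depend only on $a$. Let $A_*=\max_ja_j$, change time by $t'=A_*t$, and dilate by $z_j=\alpha_jx_j$ with $\alpha_j=(A_*/a_j)^{1/2}\ge1$. This gives the standard flow on $\prod_j\R/(2\pi\alpha_j\Z)$, with spectral support still in $[0,3N]$. Quinn's theorem with spectral parameter $4N$, $\varepsilon=\eta/4$ and $\delta=(4N)^{-1/5+\eta/2}$ is then lossless on an interval of length $(4N)^{-4/5-\eta/2}$. That interval contains $[0,A_*N^{-4/5-\eta}]$ once $N$ is large, and small $N$ are absorbed into the constant.
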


\subsection{Proof strategy and organization}\label{sub:route}

\medskip
\noindent\emph{The hyperbolic critical estimate.}
\Secref{sec:hyperbolic-proof} proves
\Thmref{thm:hyperbolic-main}.  For the upper bound, we expand in the
negative periodic direction and reduce the problem to the global endpoint
estimate on the elliptic cylinder $\R\times\T$, with an $\ell^2$-valued
coefficient sequence.  Periodic Bernstein in the remaining variable and
interpolation with spacetime $L^2$ then give the critical $L^{10/3}$ bound.
The lower bound is furnished by a packet supported on the exact periodic null
line.  The Hilbert-valued cylinder estimate and the null-packet construction
are stated at the points where they first enter the proof and are established
immediately afterward.

\medskip
\noindent\emph{Dependence on the time interval.}
\Secref{sec:time} keeps the interval length explicit.  A local Bernstein
argument and a matching coherent packet settle the shortest regime. The
remaining short-time upper bound follows from a $TT^*$ kernel estimate.
Combining these local bounds with the global cylinder summability proves
\Corref{cor:hyp-time}, including the sharp law for $T\ge1$ and the
characterization of frequency-dependent lossless families.  The sharp
dependence on $T$ in the intermediate regime $N^{-1}<T<1$ remains open.

\medskip
\noindent\emph{The elliptic comparison.}
\Secref{sec:elliptic-proof} first obtains
\Thmref{thm:elliptic-main} from a compact estimate through a
Bloch--Floquet transfer and then proves the transfer principle itself.
\Corref{cor:elliptic-short} follows by applying Quinn's lossless critical
estimate on rectangular tori \cite[Theorem~1.1]{Quinn2026} after a fixed
coefficient rescaling and transferring the resulting bound back to the
waveguide.  This arrangement keeps the known compact inputs separate from
the arguments specific to the semiperiodic model.

\section{Proof of \Thmref{thm:hyperbolic-main}}\label{sec:hyperbolic-proof}

\subsection{Notation}\label{sub:notation}

\begin{itemize}
\item We write $z\coloneqq(x,y_1,y_2)\in\R\times\T^2$ and $\zeta\coloneqq(\xi,k_1,k_2)\in\R\times\Z^2$. Torus integrals use Lebesgue measure on $[0,2\pi)$. Frequency integrals use Lebesgue measure in $\xi$ and counting measure in $k$.
\item Our Fourier transform is
\[
 \widehat f(\xi,k)\coloneqq(2\pi)^{-3/2}
 \int_\R\int_{\T^2}f(x,y)e^{-i(x\xi+y\cdot k)}\,\dd y\dd x.
\]
Fourier inversion has the same prefactor, and Plancherel gives
\[
 \|f\|_{L^2(\R\times\T^2)}^2
 =\sum_{k\in\Z^2}\int_\R|\widehat f(\xi,k)|^2\,\dd\xi.
\]
Thus $e^{itL_a}$ has multiplier $e^{-itQ_a(\xi,k)}$.
\item The frequency $N\ge2$ is dyadic. We abbreviate the norm on a variable time interval by
\[
 \mathcal S_a(N,T)\coloneqq
 \left\|e^{itL_a}P_{\le N}\right\|_{L^2(\R\times\T^2)\to L^{10/3}([0,T]\times\R\times\T^2)}.
\]
For $a=(1,1,-1)$, this agrees with the notation $\mathcal S(N,T)$ introduced
in \Subsecref{sec:main-results}.
\item We write $A\lesssim B$ when $A\le CB$ with $C$ independent of $N,T$, and $A\eqsim B$ when both inequalities hold. Subscripts record additional dependence. Constants may depend on the fixed cutoffs. Within a proof, a norm with no indicated domain is taken over the space already specified there.
\end{itemize}

\subsection{The upper and lower bounds}\label{sub:main-proof}

We seek the two bounds in \Thmref{thm:hyperbolic-main}. For the upper bound, we retain the positive cylinder and regard the remaining periodic coordinate as a Hilbert-valued variable. The precise estimate we need is the following.

\begin{lemma}\label{lem:hilbert}
Let $\mathcal H$ be a Hilbert space, $I_\gamma\coloneqq[\gamma,\gamma+1]$, and $V(t)\coloneqq e^{it\Delta_{\R\times\T}}$. For $F\in L^2(\R\times\T,\mathcal H)$,
\begin{equation}
 \left(\sum_\gamma\|V(t)F\|_{L^4(I_\gamma\times\R\times\T,\mathcal H)}^8\right)^{1/8}
 \lesssim\|F\|_{L^2(\R\times\T,\mathcal H)}.
 \label{eq:hilbert-cylinder}
\end{equation}
\end{lemma}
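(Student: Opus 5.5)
The plan is to deduce \eqref{eq:hilbert-cylinder} from the scalar estimate \eqref{eq:bg-bcp} by a Gaussian (Khintchine/Marcinkiewicz--Zygmund) randomization argument. The key point is that the operator $V(t)$ acts only on the $(x,y)$ variables and commutes with any linear map on $\mathcal H$. Since everything is linear, I first reduce to separable $\mathcal H$ (the data $F$ takes values in a separable subspace) and fix an orthonormal basis $(e_n)$. I write $F=\sum_n F_ne_n$ with scalar $F_n\in L^2(\R\times\T)$, so that $V(t)F=\sum_n (V(t)F_n)e_n$ and
\[
 \|F\|_{L^2(\mathcal H)}^2=\sum_n\|F_n\|_{L^2}^2 .
\]
By monotone convergence it suffices to treat finitely many $n$, with constants independent of the number of terms.

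Next I take i.i.d.\ standard real Gaussians $g_n$ on a probability space $\Omega$ and set $F^\omega\coloneqq\sum_n g_n(\omega)F_n$. For each fixed $(t,x,y)$, the Gaussian moment identity gives
\[
 \Bigl(\mathbb E\Bigl|\sum_n g_n V(t)F_n\Bigr|^4\Bigr)^{1/4}=c_4\Bigl(\sum_n|V(t)F_n|^2\Bigr)^{1/2}=c_4\,|V(t)F|_{\mathcal H}.
\]
Hence, by Fubini,
\[
 \|V(t)F\|_{L^4(I_\gamma;\mathcal H)}^4=c_4^{-4}\,\mathbb E\,\|V(t)F^\omega\|_{L^4(I_\gamma)}^4 .
\]
Writing $A_\gamma(\omega)\coloneqq\|V(t)F^\omega\|_{L^4(I_\gamma)}^4$, the left side of \eqref{eq:hilbert-cylinder} raised to the eighth power equals $c_4^{-8}\sum_\gamma(\mathbb E A_\gamma)^2$. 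This is the square of an $\ell^2_\gamma L^1_\omega$ norm. By Minkowski's inequality ($\ell^2$ norm of an integral is at most the integral of the $\ell^2$ norm), it is bounded by
\[
 c_4^{-8}\Bigl(\mathbb E\Bigl(\sum_\gamma A_\gamma^2\Bigr)^{1/2}\Bigr)^2 .
\]
Now I apply \eqref{eq:bg-bcp} pointwise in $\omega$:
\[
 \Bigl(\sum_\gamma A_\gamma^2\Bigr)^{1/2}=\Bigl(\sum_\gamma\|V(t)F^\omega\|_{L^4(I_\gamma)}^8\Bigr)^{1/2}\lesssim\|F^\omega\|_{L^2}^4 .
\]
Finally,
\[
 \mathbb E\|F^\omega\|_{L^2}^4\lesssim\Bigl(\mathbb E\|F^\omega\|_{L^2}^2\Bigr)^2=\Bigl(\sum_n\|F_n\|_2^2\Bigr)^2=\|F\|_{L^2(\mathcal H)}^4,
\]
where the first inequality is Gaussian hypercontractivity (or Kahane's inequality) for the Hilbert-valued Gaussian sum $\sum g_nF_n$. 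Combining these gives the eighth power of the right side of \eqref{eq:hilbert-cylinder}.

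The main obstacle is exactly this interchange. The $\ell^8$ summation in $\gamma$ must survive the averaging over $\omega$, and a naive Fubini fails because the exponent $8/4=2$ sits outside the expectation. Minkowski's inequality resolves this in the favorable direction, since $2\ge1$. After that, the only remaining input is the comparison of fourth and second moments of $\|F^\omega\|_{L^2}$, which holds with absolute constants. An alternative, which I would use as a check, is to note that for real Hilbert spaces the lift of a bounded operator $L^2\to\ell^8L^4$ to $L^2(\mathcal H)\to\ell^8L^4(\mathcal H)$ is the Marcinkiewicz--Zygmund theorem, valid because the exponents $4,8\ge2$.
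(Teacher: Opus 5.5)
Your argument is correct, but it takes a different route from the paper's. The paper never randomizes. For a finite orthonormal expansion $F=\sum_{j\le J}f_je_j$ it uses the pointwise identity $|V(t)F|_{\mathcal H}^2=\sum_j|V(t)f_j|^2$ and the triangle inequality in $L^{4/2}$ to get $\|V(t)F\|_{L^4(I_\gamma\times\R\times\T,\mathcal H)}^2\le\sum_j a_{\gamma,j}^2$, where $a_{\gamma,j}=\|V(t)f_j\|_{L^4(I_\gamma\times\R\times\T)}$. The triangle inequality in $\ell^{8/2}_\gamma$ then bounds $(\sum_\gamma\|V(t)F\|^8)^{1/4}$ by $\sum_j(\sum_\gamma a_{\gamma,j}^8)^{1/4}$. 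The paper applies the scalar estimate to each $f_j$ separately and concludes by density. That proof is shorter and uses only the $2$-convexity of the $\ell^8L^4$ norm, i.e.\ that the exponents $4,8\ge2$. It also passes the Barron--Christ--Pausader constant to the Hilbert-valued estimate with no loss.

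Your version trades these two triangle inequalities for a pointwise Gaussian moment comparison, Minkowski over $\omega$ (using $8/4\ge1$), and hypercontractivity for $\mathbb E\|F^\omega\|_2^4\lesssim(\mathbb E\|F^\omega\|_2^2)^2$; a direct fourth-moment computation even gives the constant $3$ there. This costs absolute constants. In exchange, it is the standard Marcinkiewicz--Zygmund template: with Kahane's inequality in place of your Minkowski step, it would lift $L^2\to\ell^qL^p$ bounds for any finite $p,q\ge1$, whereas the paper's argument genuinely needs $p,q\ge2$. Your monotone-convergence reduction to finitely many coordinates is also slightly cleaner than the paper's density step.

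One small correction. With real Gaussians and complex values $z_n=V(t)F_n$ at a fixed point, $\mathbb E|\sum_ng_nz_n|^4$ is not a fixed multiple $c_4^4(\sum_n|z_n|^2)^2$. It ranges between $2$ and $3$ times that quantity depending on the phases of the $z_n$; it equals $3$ times it when all $z_n$ are real. You only need a lower bound, and $\mathbb E|\sum_ng_nz_n|^4\ge(\mathbb E|\sum_ng_nz_n|^2)^2=(\sum_n|z_n|^2)^2$ by Cauchy--Schwarz suffices. Alternatively, use standard complex Gaussians, for which the identity holds exactly with $c_4^4=2$.
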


For the lower bound, we need the periodic factor to remain stationary while the continuous factor spreads on the prescribed time scale.

\begin{lemma}\label{lem:null-packet}
For every dyadic $N\ge2$ and every $T\ge N^{-2}$, there is $f_{N,T}\in L^2(\R\times\T^2)$ such that $\|f_{N,T}\|_2=1$, $P_{\le N}f_{N,T}=f_{N,T}$, and
\[
 \|e^{itL}f_{N,T}\|_{L^{10/3}([0,T]\times\R\times\T^2)}\gtrsim(NT)^{1/5}.
\]
\end{lemma}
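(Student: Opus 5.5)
We will build $f_{N,T}$ as a tensor product of a stationary periodic null packet in $(y_1,y_2)$ and a rescaled Euclidean packet in $x$. Because the multiplier factors as $e^{-itQ(\xi,k)}=e^{-it\xi^2}e^{-it(k_1^2-k_2^2)}$, the flow acts separately on the two factors. The spacetime $L^{10/3}$ norm then splits as a product of a periodic $L^{10/3}(\T^2)$ norm, which is constant in time, and a Euclidean $L^{10/3}_{t,x}$ norm over $[0,T]\times\R$.

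\emph{Periodic factor.} Set $M\coloneqq\lfloor N/4\rfloor$ and
\[
 g(y)\coloneqq\sum_{|j|\le M}e^{ij(y_1+y_2)} .
\]
All frequencies $(j,j)$ lie on the null line \eqref{eq:bg-null-line}, so $j^2-j^2=0$ and $e^{-it(k_1^2-k_2^2)}g=g$ for every $t$. The function $g$ is a Dirichlet kernel $D_M$ evaluated at $y_1+y_2$. By the measure-preserving change of variables $(y_1,y_2)\mapsto(y_1+y_2,y_2)$ on $\T^2$, we get $\|g\|_{L^p(\T^2)}\eqsim\|D_M\|_{L^p(\T)}$. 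For $p=10/3>1$ the standard bound is $\|D_M\|_{L^p(\T)}\eqsim M^{1-1/p}$. Hence
\[
 \|g\|_{L^2}\eqsim M^{1/2},\qquad \|g\|_{L^{10/3}}\eqsim M^{7/10},
\]
and the normalized profile $g/\|g\|_2$ has $L^{10/3}$ norm $\eqsim M^{1/5}\eqsim N^{1/5}$. When $N$ is so small that $M=0$, this is a single constant mode, which is harmless since $N^{1/5}\eqsim1$ in that case.

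\emph{Euclidean factor.} Fix a nonzero Schwartz function $\psi$ with $\widehat\psi$ supported in $[-1/4,1/4]$ and $\|\psi\|_2=1$. Set $\varphi_T(x)\coloneqq T^{-1/4}\psi(T^{-1/2}x)$, which is $L^2$-normalized with Fourier support in $|\xi|\le T^{-1/2}/4\le N/4$, using $T\ge N^{-2}$. Parabolic scaling gives
\[
 e^{it\partial_x^2}\varphi_T(x)=T^{-1/4}\bigl(e^{i(t/T)\partial_x^2}\psi\bigr)(T^{-1/2}x).
\]
The map $s\mapsto\|e^{is\partial_x^2}\psi\|_{L^{10/3}(\R)}$ is continuous and never zero, so it is bounded below by some $c>0$ on the compact interval $s\in[0,1]$. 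Changing variables in $x$ and $t$ yields
\[
 \|e^{it\partial_x^2}\varphi_T\|_{L^{10/3}([0,T]\times\R)}\gtrsim T^{-1/4}\,T^{\frac12\cdot\frac3{10}}\,T^{\frac3{10}}=T^{1/5}.
\]

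\emph{Assembling.} Define $f_{N,T}\coloneqq\varphi_T\otimes g/\|g\|_2$. It is $L^2$-normalized. Its Fourier support satisfies $\sqrt{\xi^2+|k|^2}\le N/4+\sqrt2\,N/4<N$, so it lies in the plateau of $\chi$ and $P_{\le N}f_{N,T}=f_{N,T}$. Multiplying the two bounds gives $\|e^{itL}f_{N,T}\|_{L^{10/3}([0,T]\times\R\times\T^2)}\gtrsim N^{1/5}T^{1/5}$, as claimed.

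The only step that is not pure bookkeeping is the Euclidean lower bound. That bound must be uniform in $T$, which the scaling reduction to the fixed window $s\in[0,1]$ provides. It is also the only place the hypothesis $T\ge N^{-2}$ is used, namely to fit the packet inside the frequency cutoff.
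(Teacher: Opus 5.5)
Your proposal is correct and takes essentially the same approach as the paper. Both proofs tensor a stationary Dirichlet packet on the null line $k_1=k_2$ with an $L^2$-normalized Euclidean packet at spatial scale $T^{1/2}$, so the spacetime norm factors as $N^{1/5}\cdot T^{1/5}$. The differences are cosmetic: you center the periodic block at $0$ and get the Euclidean bound by parabolic scaling to the window $s\in[0,1]$, whereas the paper uses the block $N/2\le j\le 5N/8$ and an explicit pointwise lower bound on $|x|\lesssim T^{1/2}$, $0\le t\le T$.
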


\begin{proof}[Proof of \Thmref{thm:hyperbolic-main}]
We now expand in the negative periodic direction,
\[
 P_{\le N}f(x,y_1,y_2)
 =(2\pi)^{-1/2}\sum_{|k|\le2N}f_k(x,y_1)e^{iky_2}.
\]
Then
\[
 u(t,x,y)\coloneqq e^{itL}P_{\le N}f(x,y)
 =(2\pi)^{-1/2}\sum_{|k|\le2N}e^{itk^2}V(t)f_k(x,y_1)e^{iky_2}.
\]
By periodic Bernstein and Parseval,
\begin{align*}
 \|u(t,x,y_1,\cdot)\|_{L^4(\T)}
 &\lesssim N^{1/4}\|u(t,x,y_1,\cdot)\|_{L^2(\T)}\\
 &=N^{1/4}\left(\sum_{|k|\le2N}|V(t)f_k(x,y_1)|^2\right)^{1/2}.
\end{align*}
We apply \eqref{eq:hilbert-cylinder} with $\mathcal H=\ell^2(\Z)$ and obtain
\begin{align}
 \left(\sum_\gamma\|u\|_{L^4(I_\gamma\times\R\times\T^2)}^8\right)^{1/8}
 &\lesssim N^{1/4}\left(\sum_\gamma
 \|(V(t)f_k)_k\|_{L^4(I_\gamma\times\R\times\T,\ell^2)}^8\right)^{1/8}\notag\\
 &\lesssim N^{1/4}\left(\sum_k\|f_k\|_2^2\right)^{1/2}
 \le N^{1/4}\|f\|_2.
 \label{eq:waveguide-L4-sequence}
\end{align}

For $T\ge1$, let $J_T\coloneqq\lceil T\rceil$. H\"older's inequality gives
\begin{align*}
 \|u\|_{L^4([0,T]\times\R\times\T^2)}^4
 &\le\sum_{\gamma=0}^{J_T-1}\|u\|_{L^4(I_\gamma\times\R\times\T^2)}^4\\
 &\le J_T^{1/2}\left(\sum_{\gamma=0}^{J_T-1}
 \|u\|_{L^4(I_\gamma\times\R\times\T^2)}^8\right)^{1/2}
 \lesssim T^{1/2}N\|f\|_2^4.
\end{align*}
Interpolating with $\|u\|_{L^2([0,T]\times\R\times\T^2)}\le T^{1/2}\|f\|_2$, we find
\begin{align}
 \|u\|_{L^{10/3}([0,T]\times\R\times\T^2)}
 &\le\|u\|_{L^4([0,T]\times\R\times\T^2)}^{4/5}
 \|u\|_{L^2([0,T]\times\R\times\T^2)}^{1/5}\notag\\
 &\lesssim(NT)^{1/5}\|f\|_2.
 \label{eq:hyp-upper}
\end{align}
In particular, $T=1$ gives the required upper bound.

For the reverse inequality, we apply \Lemref{lem:null-packet} with $T=1$ and obtain
\[
 \|e^{itL}P_{\le N}\|_{L^2(\R\times\T^2)\to L^{10/3}([0,1]\times\R\times\T^2)}
 \ge \|e^{itL}f_{N,1}\|_{L^{10/3}([0,1]\times\R\times\T^2)}
 \gtrsim N^{1/5}.
\]
This proves the theorem. The calculation above also establishes the upper bound for every $T\ge1$.
\end{proof}

\subsection{Proofs of the two lemmas}\label{sub:main-lemmas}

\begin{proof}[Proof of \Lemref{lem:hilbert}]
We use the global cylinder estimate of Barron--Christ--Pausader \cite[Theorem~1.1]{BCP2021}.
Let $I_\gamma\coloneqq[\gamma,\gamma+1]$. For every $g\in L^2(\R\times\T)$,
\begin{equation}
 \left(\sum_{\gamma\in\Z}
 \|e^{it\Delta_{\R\times\T}}g\|_{L^4(I_\gamma\times\R\times\T)}^8\right)^{1/8}
 \lesssim\|g\|_2.
 \label{eq:BCP-benchmark}
\end{equation}

Let $\mathcal H$ be a Hilbert space. Let $V(t)\coloneqq e^{it\Delta_{\R\times\T}}$ and let $(e_j)_{j=1}^J$ be orthonormal. For
\[
 F\coloneqq\sum_{j=1}^Jf_je_j
 \qquad\text{and}\qquad
 a_{\gamma,j}\coloneqq\|V(t)f_j\|_{L^4(I_\gamma\times\R\times\T)},
\]
orthogonality and Minkowski give
\begin{align*}
 \|V(t)F\|_{L^4(I_\gamma\times\R\times\T,\mathcal H)}^2
 &=\left\|\sum_{j=1}^J|V(t)f_j|^2\right\|_{L^2(I_\gamma\times\R\times\T)}\\
 &\le\sum_{j=1}^J\bigl\||V(t)f_j|^2\bigr\|_{L^2(I_\gamma\times\R\times\T)}
 =\sum_{j=1}^Ja_{\gamma,j}^2.
\end{align*}
Applying Minkowski once more and then \eqref{eq:BCP-benchmark}, we obtain
\begin{align*}
 \left(\sum_\gamma\|V(t)F\|_{L^4(I_\gamma\times\R\times\T,\mathcal H)}^8\right)^{1/4}
 &\le\left(\sum_\gamma\left(\sum_{j=1}^Ja_{\gamma,j}^2\right)^4\right)^{1/4}\\
 &\le\sum_{j=1}^J\left(\sum_\gamma a_{\gamma,j}^8\right)^{1/4}\\
 &\lesssim\sum_{j=1}^J\|f_j\|_2^2
 =\|F\|_{L^2(\R\times\T,\mathcal H)}^2.
\end{align*}
The constant is independent of $J$.  Finite-rank simple functions are dense in
the Bochner space $L^2(\R\times\T,\mathcal H)$ (after restricting to the
separable closed subspace generated by the essential range of $F$).  Taking
square roots and passing to the limit therefore proves the lemma for every
$F\in L^2(\R\times\T,\mathcal H)$.

\end{proof}

\begin{proof}[Proof of \Lemref{lem:null-packet}]
We choose frequencies on the periodic null line. Define
\[
 J_N\coloneqq\{j\in\Z\mid N/2\le j\le5N/8\},
 \qquad\text{and}\qquad M_N\coloneqq\#J_N\eqsim N.
\]
The function
\[
 H_N(y_1,y_2)\coloneqq\frac{1}{2\pi\sqrt{M_N}}
 \sum_{j\in J_N}e^{ij(y_1+y_2)}
\]
satisfies
\begin{align*}
 \|H_N\|_{L^2(\T^2)}^2
 &=\frac{1}{4\pi^2M_N}\sum_{j\in J_N}4\pi^2=1,\\
 e^{it(\partial_{y_1}^2-\partial_{y_2}^2)}H_N
 &=\frac{1}{2\pi\sqrt{M_N}}
 \sum_{j\in J_N}e^{it(-j^2+j^2)}e^{ij(y_1+y_2)}=H_N.
\end{align*}
Let $\theta\in[-\pi,\pi]$ represent $y_1+y_2$ modulo $2\pi$, and define
\[
 \Omega_N\coloneqq\{(y_1,y_2)\in\T^2\mid |\theta|\le(100M_N)^{-1}\}.
\]
On this strip, removing the phase of the first summand gives
\begin{align*}
 \left|\sum_{j\in J_N}e^{ij\theta}\right|
 &=\left|\sum_{m=0}^{M_N-1}e^{im\theta}\right|
 \ge\sum_{m=0}^{M_N-1}\cos(m\theta)
 \ge M_N\cos(1/100),\\
 |\Omega_N|&=2\pi\frac{2}{100M_N}\eqsim M_N^{-1}.
\end{align*}
Consequently,
\begin{align}
 \|H_N\|_{L^{10/3}(\T^2)}
 &\ge\left(\int_{\Omega_N}|H_N(y)|^{10/3}\,\dd y\right)^{3/10}\notag\\
 &\gtrsim M_N^{1/2}|\Omega_N|^{3/10}
 \eqsim N^{1/5}.
 \label{eq:null-dirichlet}
\end{align}

We choose a nonnegative nonzero $\widehat g\in C_c^\infty((-1,1))$ with $\|g\|_2=1$. For $T\ge N^{-2}$, define
\[
 r\coloneqq10^{-2}T^{-1/2}
 \qquad\text{and}\qquad g_r(x)\coloneqq r^{1/2}g(rx).
\]
Then
\[
 \widehat g_r(\xi)=r^{-1/2}\widehat g(\xi/r),
 \qquad \supp\widehat g_r\subseteq(-r,r).
\]
Fourier inversion gives
\[
 e^{it\partial_x^2}g_r(x)=\frac{r^{1/2}}{\sqrt{2\pi}}
 \int_\R e^{i(rx\eta-tr^2\eta^2)}\widehat g(\eta)\,\dd\eta.
\]
On $0\le t\le T$, $|x|\le(100r)^{-1}$, and $|\eta|\le1$, positivity of $\widehat g$ gives
\begin{align*}
 |rx\eta-tr^2\eta^2|&\le10^{-2}+10^{-4}<\pi/3,\\
 |e^{it\partial_x^2}g_r(x)|
 &\ge\frac{r^{1/2}}{2\sqrt{2\pi}}\int_\R\widehat g(\eta)\,\dd\eta
 \gtrsim r^{1/2},\\
 \|e^{it\partial_x^2}g_r\|_{L^{10/3}([0,T]\times\R)}^{10/3}
 &\ge\int_0^T\int_{|x|\le(100r)^{-1}}
 |e^{it\partial_x^2}g_r(x)|^{10/3}\,\dd x\dd t\\
 &\gtrsim Tr^{5/3}r^{-1}\eqsim T^{2/3}.
\end{align*}
For $f_{N,T}(x,y)\coloneqq g_r(x)H_N(y)$, we have $\|f_{N,T}\|_2=1$.
Since $T\ge N^{-2}$, we have $r\le10^{-2}N$.  Thus, on the Fourier support
of $f_{N,T}$,
\[
 \xi^2+2j^2\le(10^{-4}+25/32)N^2<N^2,
\]
and hence $P_{\le N}f_{N,T}=f_{N,T}$. The periodic factor is stationary, giving
\begin{align}
 \mathcal S(N,T)
 &\ge\|e^{itL}f_{N,T}\|_{L^{10/3}([0,T]\times\R\times\T^2)}\notag\\
 &=\|e^{it\partial_x^2}g_r\|_{L^{10/3}([0,T]\times\R)}
 \|H_N\|_{L^{10/3}(\T^2)}
 \gtrsim (NT)^{1/5}.
 \label{eq:null-packet}
\end{align}
The same computation gives the asserted norm of $e^{itL}f_{N,T}$.
\end{proof}

\section{Time intervals and proof of \Corref{cor:hyp-time}}\label{sec:time}

\subsection{Derivation of the corollary}\label{sub:time-corollary}

The proof of \Thmref{thm:hyperbolic-main} already gives the long-time upper bound and the null-packet lower bound. To settle the short intervals and the uniform families in \Corref{cor:hyp-time}, we need the following estimate, which applies to both signatures.

\begin{proposition}\label{prop:short}
For every fixed $a\in(\R\setminus\{0\})^3$,
\begin{equation}
 \mathcal S_a(N,T)\eqsim_a\min\{(N^2T)^{3/10},1\}
 \qquad\text{if }0<T\le N^{-1}.
 \label{eq:short}
\end{equation}
The upper bound by $(N^2T)^{3/10}$ holds for all $T>0$. The lower bound by the minimum holds for $0<T\le1$.
\end{proposition}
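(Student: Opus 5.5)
We prove three things: the upper bound $(N^2T)^{3/10}$ for all $T>0$, the upper bound $\lesssim 1$ on $N^{-2}\le T\le N^{-1}$, and the matching lower bounds for $T\le1$.

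\textbf{Upper bound $(N^2T)^{3/10}$ for all $T$.} We use Bernstein in all three spatial variables. The frequency support of $P_{\le N}f$ lies in $\{|\xi|\le 2N,\ |k|\le 2N\}$, a set of measure $\eqsim N^3$ in the mixed Lebesgue--counting measure. Hence $\|P_{\le N}f\|_{L^\infty}\lesssim N^{3/2}\|f\|_2$, and interpolating with $L^2$ gives $\|e^{itL_a}P_{\le N}f\|_{L^{10/3}(\R\times\T^2)}\lesssim N^{3/2\cdot 2/5}\|f\|_2=N^{3/5}\|f\|_2$ uniformly in $t$. Here we used that $e^{itL_a}$ is unitary and commutes with $P_{\le N}$. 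Integrating over $[0,T]$ gives the factor $T^{3/10}$, so $\mathcal S_a(N,T)\lesssim N^{3/5}T^{3/10}=(N^2T)^{3/10}$.

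\textbf{Upper bound $\lesssim1$ for $N^{-2}\le T\le N^{-1}$.} This is the main step. We run a $TT^*$ argument with a frequency-localized kernel estimate. Let $\tilde\chi$ be a smooth cutoff to $|\zeta|\lesssim N$. The kernel of $e^{itL_a}P_{\le N}^2$ is
\[
K_t(x,y)=\int_\R e^{i(x\xi-ta_1\xi^2)}\chi^2\,\dd\xi\cdot\sum_{k}e^{i(y\cdot k-ta_2k_1^2-ta_3k_2^2)}(\cdots).
\]
The cutoff is radial rather than product-type, so we first dominate it by, or decompose it into, product cutoffs. A harmless alternative is to insert an extra product cutoff $\tilde P$ that equals one on the support of $\chi(N^{-1}|\cdot|)$. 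Then $\|e^{itL_a}P_{\le N}\|\le\|e^{itL_a}\tilde P\|$ by $L^2$-boundedness of $P_{\le N}$. With the product cutoff the kernel factorizes. For $|t|\lesssim N^{-1}$, each one-dimensional periodic factor obeys $|\sum_k e^{iyk-itak^2}\psi(k/N)|\lesssim \min(N,|t|^{-1/2})$. This follows from Poisson summation, which is valid because $|t|N\lesssim1$: the phase derivative $2atk$ varies by only $O(1)$ over the support, so only $O(1)$ Poisson terms are non-negligible and each is bounded by $|t|^{-1/2}$ via stationary phase. The Euclidean factor obeys the same bound. This yields the three-dimensional dispersive estimate $\|K_t\|_{L^\infty}\lesssim \min(N^3,|t|^{-3/2})$ for $|t|\lesssim N^{-1}$, uniformly in the signs of $a$, since stationary phase only sees $|a_j|$.

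\textbf{Local Strichartz on $[0,T]$.} Combining this dispersive estimate with energy conservation, the Keel--Tao machinery, or simply Hardy--Littlewood--Sobolev in $t$ applied to the $TT^*$ operator restricted to $|t-s|\le T\le N^{-1}$, gives the Euclidean critical $L^{10/3}$ estimate on $[0,T]$ with constant $O(1)$. The point is that the kernel bound $|t-s|^{-3/2}$ produces the exponent $3(1/2-3/10)=3/5$, and the HLS step with $L^{10/3}\to L^{10/7}$ in time is exactly scale-critical. So $\mathcal S_a(N,T)\lesssim1$ for $T\le N^{-1}$, and combining with the previous bound gives $\lesssim\min\{(N^2T)^{3/10},1\}$.

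\textbf{Lower bounds.} For $T\le N^{-2}$ we take a coherent packet: normalized data $f$ whose Fourier transform is spread evenly over a box of frequencies $|\xi|\le cN$, $|k_i|\le cN$, all with positive coefficients. Then $|P_{\le N}f|\gtrsim N^{3/2}$ on a spatial set of measure $\eqsim N^{-3}$, namely $|x|,|y_i|\lesssim 1/(100N)$. For $0\le t\le T\le c'N^{-2}$ the phases $tQ_a$ stay below $\pi/3$, so the same lower bound persists for $e^{itL_a}f$. This gives
\[
\|e^{itL_a}f\|_{L^{10/3}([0,T]\times\R\times\T^2)}\gtrsim (T N^{5}N^{-3})^{3/10}=(N^2T)^{3/10}.
\]
For $N^{-2}\le T\le1$, the lower bound $\gtrsim1$ follows by monotonicity in $T$: $\mathcal S_a(N,T)\ge\mathcal S_a(N,N^{-2})\gtrsim1$. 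The main obstacle is the periodic dispersive bound for $|t|\lesssim N^{-1}$, handled by the Poisson summation argument above, together with the product-cutoff reduction.
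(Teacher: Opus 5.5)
Your proposal is correct and follows the paper's proof essentially step for step: Bernstein for the $(N^2T)^{3/10}$ bound, a product cutoff equal to one on the support of the radial cutoff, a factorized kernel bound $\min\{N^3,|t|^{-3/2}\}$ for $|t|\le N^{-1}$ via Poisson summation and van der Corput, then $TT^*$ with Hardy--Littlewood--Sobolev, and a coherent packet for the lower bound. The only differences are cosmetic. You use monotonicity in $T$ where the paper uses $\tau=\min\{T,b_aM^{-2}\}$, and your box of side $cN$ covers small $N$ directly, whereas the paper treats $N\in\{2,4,8\}$ separately.
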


\begin{proof}[Proof of \Corref{cor:hyp-time}]
\Propref{prop:short} gives
\eqref{eq:cor-hyp-very-short} and \eqref{eq:cor-hyp-short} by separating the
two alternatives in the minimum in \eqref{eq:short}.  The upper bound
\eqref{eq:hyp-upper} and the lower bound \eqref{eq:null-packet} give
\eqref{eq:cor-hyp-long}.

It remains to verify the bounds in the intermediate regime.  Since $NT>1$,
\eqref{eq:null-packet} gives the lower bound in
\eqref{eq:cor-hyp-intermediate}.  Partitioning $[0,T]$ into at most $2NT$
intervals of length at most $N^{-1}$ and summing \eqref{eq:local-upper} in
$L^{10/3}$ gives
\[
 \mathcal S(N,T)\lesssim (NT)^{3/10}.
\]
On the other hand, \eqref{eq:waveguide-L4-sequence} on $[0,T]\subseteq[0,1]$
and interpolation with the spacetime $L^2$ estimate give
\begin{align*}
 \|e^{itL}P_{\le N}f\|_{L^{10/3}([0,T]\times\R\times\T^2)}
 &\le\|e^{itL}P_{\le N}f\|_{L^4([0,T]\times\R\times\T^2)}^{4/5}
 \|e^{itL}P_{\le N}f\|_{L^2([0,T]\times\R\times\T^2)}^{1/5}\\
 &\lesssim T^{1/10}N^{1/5}\|f\|_2.
\end{align*}
Taking the better of these two bounds proves
\eqref{eq:cor-hyp-intermediate}.

Suppose $NT_N\le K$. We partition $[0,T_N]$ into $J_N'\le1+K$ intervals $[s_j,s_j+\ell_j]$ with $\ell_j\le N^{-1}$. Time translation and \eqref{eq:local-upper} give
\begin{align*}
 \|e^{itL}P_{\le N}f\|_{L^{10/3}([0,T_N]\times\R\times\T^2)}^{10/3}
 &=\sum_{j=1}^{J_N'}\int_0^{\ell_j}
 \|e^{i\tau L}P_{\le N}(e^{is_jL}f)\|_{L^{10/3}(\R\times\T^2)}^{10/3}\dd\tau\\
 &\lesssim\sum_{j=1}^{J_N'}\|e^{is_jL}f\|_2^{10/3}
 \le(1+K)\|f\|_2^{10/3}.
\end{align*}
Conversely, if $\mathcal S(N,T_N)\le M_0$, then \eqref{eq:null-packet} implies
\[
 NT_N\le
 \begin{cases}
 C M_0^5,&T_N\ge N^{-2},\\
 N^{-1},&T_N<N^{-2}.
 \end{cases}
\]
This proves \eqref{eq:cor-hyp-lossless}.

For fixed $T_0\ge1$, we use \eqref{eq:cor-hyp-long}. For $0<T_0<1$ and $N^{-2}\le T_0$, we have
\[
 T_0^{1/5}N^{1/5}\lesssim\mathcal S(N,T_0)
 \le\mathcal S(N,1)\lesssim N^{1/5}.
\]
The finitely many remaining frequencies satisfy \eqref{eq:coherent-lower}, and we conclude that $\mathcal S(N,T_0)\eqsim_{T_0}N^{1/5}$.
\end{proof}

\subsection{Proof of the short-time estimate}\label{sub:local}

We now prove \Propref{prop:short}. Bernstein handles the shortest intervals. To reach length $N^{-1}$, we factor the truncated kernel and use its dispersive bound in $TT^*$.

We choose a real $\rho\in C_c^\infty(\R)$ with $0\le\rho\le1$, equal to one on $[-2,2]$ and supported in $[-4,4]$, and define
\[
 \Pi_N\coloneqq\rho(D_x/N)\rho(D_{y_1}/N)\rho(D_{y_2}/N).
\]
We will use
\begin{equation}
 \Pi_NP_{\le N}=P_{\le N}.
 \label{eq:Pi-identity}
\end{equation}
Indeed, we write
\[
 p_N(\xi,k)\coloneqq\chi\bigl(N^{-1}\sqrt{\xi^2+|k|^2}\bigr),
\]
and
\[
 m_N(\xi,k)\coloneqq\rho(\xi/N)\rho(k_1/N)\rho(k_2/N).
\]
The choice of $\rho$ gives
\begin{align*}
 (\xi,k)\in\supp p_N
 &\ \Longrightarrow\ \max\{|\xi|,|k_1|,|k_2|\}\le2N
 \ \Longrightarrow\ m_N(\xi,k)=1,\\
 \widehat{\Pi_NP_{\le N}f}
 &=m_Np_N\widehat f
 =p_N\widehat f
 =\widehat{P_{\le N}f}.
\end{align*}
By Plancherel, we obtain \eqref{eq:Pi-identity} and also
\[
 \Pi_N^2P_{\le N}=P_{\le N}=P_{\le N}\Pi_N.
\]

The kernel bound needed in this argument is the following. We defer its proof until after \Propref{prop:short}.

\begin{lemma}\label{lem:kernel}
For $b\ne0$ and $0<|t|\le1$,
\begin{align}
 \sup_v\left|\int_\R e^{i(v\xi-bt\xi^2)}\rho(\xi/N)^2\,\dd\xi\right|
 &\lesssim_b\min\{N,|t|^{-1/2}\},\label{eq:factor-cont}\\
 \sup_y\left|\sum_{k\in\Z}e^{i(ky-btk^2)}\rho(k/N)^2\right|
 &\lesssim_b\min\{N,|t|^{-1/2}(1+N|t|)\}.\label{eq:factor-per}
\end{align}
The kernel $K_N$ of $e^{itL_a}\Pi_N^2$ satisfies
\begin{equation}
 \|K_N(t)\|_\infty\lesssim_a\min\{N^3,|t|^{-3/2}\}
 \qquad\text{for }0<|t|\le N^{-1}.
 \label{eq:local-kernel}
\end{equation}
\end{lemma}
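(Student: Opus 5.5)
The plan is to prove the two one-dimensional bounds separately and then obtain \eqref{eq:local-kernel} by factorization.

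\emph{Continuous factor \eqref{eq:factor-cont}.} The bound by $N$ is immediate, since $\rho(\cdot/N)^2$ is supported in an interval of length $8N$. For the bound by $|t|^{-1/2}$ I write the integral as a Fourier transform. Set
\[
 \mathcal F^{-1}[e^{-ibt\xi^2}]=c\,(bt)^{-1/2}e^{iv^2/(4bt)},
\]
which has modulus $\eqsim|bt|^{-1/2}$. The integral is then the convolution of this function with $\mathcal F^{-1}[\rho(\cdot/N)^2]$. The latter is $N\psi(N\cdot)$ with $\psi$ Schwartz, so its $L^1$ norm is $O(1)$. Young's inequality therefore gives the bound $\lesssim|bt|^{-1/2}$. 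Alternatively, van der Corput with second derivative $2|bt|$ and total variation of the amplitude $O(1)$ gives the same result.

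\emph{Periodic factor \eqref{eq:factor-per}.} The trivial bound $\lesssim N$ again comes from counting the support. For the second bound I use Poisson summation:
\[
 \sum_k e^{i(ky-btk^2)}\rho(k/N)^2=\sum_{m\in\Z}\int_\R e^{i((y+2\pi m)\xi-bt\xi^2)}\rho(\xi/N)^2\,\dd\xi .
\]
I split the $m$-sum into two ranges.

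For $|y+2\pi m|\ge 100|b|tN$, the phase derivative $|y+2\pi m-2bt\xi|$ is $\gtrsim|y+2\pi m|$ on the support. Repeated integration by parts gives a term of size $N(N|y+2\pi m|)^{-2}$, which is summable in $m$ with total $O(1)$. This is harmless, since $|t|^{-1/2}\ge1$.

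The remaining $m$ with $|y+2\pi m|<100|b|tN$ number at most $1+C|b|N|t|$. Each such term is $\lesssim|t|^{-1/2}$ by \eqref{eq:factor-cont}. Together these give $|t|^{-1/2}(1+N|t|)$.

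The main obstacle is making the non-stationary tail uniform in $y$, $t$ and $N$. The derivatives of the amplitude are $O(N^{-j})$, and I need the phase gradient $\gtrsim|y+2\pi m|\gtrsim1$ on all of $\supp\rho(\cdot/N)$. Both hold with the constant $100|b|$, using $|t|\le1$.

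\emph{Full kernel \eqref{eq:local-kernel}.} The kernel of $e^{itL_a}\Pi_N^2$ factors as a product:
\[
 K_N(t,z)=c\,I_{a_1}(t,x)\,S_{a_2}(t,y_1)\,S_{a_3}(t,y_2),
\]
where $I$ is the continuous integral and the $S$ are the periodic sums, each with $b=a_j$. This uses the multiplier $e^{-itQ_a}$ and the tensor structure of $\Pi_N$.

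For $|t|\le N^{-1}$ we have $1+N|t|\le2$, so each factor is $\lesssim_a\min\{N,|t|^{-1/2}\}$. Multiplying the three factors gives $\min\{N,|t|^{-1/2}\}^3=\min\{N^3,|t|^{-3/2}\}$.

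Signs play no role here, since only $|b|$ enters the estimates. This is why the lemma holds for every $a\in(\R\setminus\{0\})^3$.
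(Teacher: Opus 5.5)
Your route is the paper's: counting for the $N$ bounds, van der Corput for \eqref{eq:factor-cont}, Poisson summation with near terms bounded by \eqref{eq:factor-cont} and far terms by two integrations by parts, and then the product of the three factors using $1+N|t|\le2$. Your Young's-inequality alternative for the continuous factor is also fine. However, the tail step is wrong as written.

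Reduce to $y\in[-\pi,\pi]$ by periodicity. The $m=0$ term then has $|y+2\pi m|=|y|$. Your far range $|y|\ge100|b||t|N$ does not force $|y|\gtrsim1$: when $N|t|$ is small, $|y|$ can be as small as $100|b||t|N$. Neither the constant $100|b|$ nor $|t|\le1$ rules this out. In that case your integration-by-parts bound $N^{-1}|y|^{-2}$ is of order $N^{-3}|t|^{-2}$. For example, it is of order $N$ when $|t|=N^{-2}$. So the claims ``$|y+2\pi m|\gtrsim1$'' and ``total $O(1)$'' fail for this term.

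The repair is immediate. Always put $m=0$ in the near set and bound it by \eqref{eq:factor-cont}; your count $1+C|b|N|t|$ changes by at most one. Equivalently, split by $|m|\le C_b(1+N|t|)$, as the paper does. Then every far index has $|m|\ge2$, so $|y+2\pi m|\ge\pi|m|$, and the tail sums to $O(N^{-1})\le O(|t|^{-1/2})$.

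There is also a second fix. Your bound $N^{-3}|t|^{-2}$ is $\lesssim|t|^{-1/2}$ exactly when $|t|\gtrsim N^{-2}$, which is the only regime where the second entry of the minimum is active. That is not the justification you gave, though.

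With either correction, the argument is complete and coincides with the paper's proof.
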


\begin{proof}[Proof of \Propref{prop:short}]
We first use Bernstein and unitarity to obtain
\begin{align*}
 \|e^{itL_a}P_{\le N}f\|_{L^{10/3}(\R\times\T^2)}
 &\lesssim N^{3/5}\|e^{itL_a}P_{\le N}f\|_2
 \le N^{3/5}\|f\|_2,\\
 \|e^{itL_a}P_{\le N}f\|_{L^{10/3}([0,T]\times\R\times\T^2)}
 &\lesssim T^{3/10}N^{3/5}\|f\|_2.
\end{align*}
Hence
\begin{equation}
 \mathcal S_a(N,T)\lesssim(N^2T)^{3/10}.
 \label{eq:bernstein}
\end{equation}

We next take $I\coloneqq[0,T]$, $T\le N^{-1}$, and define
\[
 Sf(t)\coloneqq\mathbf1_I(t)e^{itL_a}\Pi_Nf.
\]
For $F\in C_c^\infty(\R\times\R\times\T^2)$, the Fourier multipliers commute, and
\begin{align*}
 S^*F&=\int_I\Pi_Ne^{-isL_a}F(s)\,\dd s,\\
 SS^*F(t)&=\mathbf1_I(t)\int_Ie^{i(t-s)L_a}\Pi_N^2F(s)\,\dd s.
\end{align*}
Interpolating \eqref{eq:local-kernel} with the $L^2$ estimate gives
\[
 \|e^{ihL_a}\Pi_N^2F\|_{L^{10/3}(\R\times\T^2)}
 \lesssim_a |h|^{-3/5}\|F\|_{L^{10/7}(\R\times\T^2)}
 \qquad(0<|h|\le N^{-1}).
\]
We apply the Hardy--Littlewood--Sobolev inequality
\begin{equation}
 \left\|\int_\R|t-s|^{-3/5}b(s)\,\dd s\right\|_{L^{10/3}_t(\R)}
 \lesssim\|b\|_{L^{10/7}(\R)}
 \label{eq:hls}
\end{equation}
to $b(s)\coloneqq\mathbf1_I(s)\|F(s)\|_{L^{10/7}(\R\times\T^2)}$. Together with H\"older's inequality, duality, and \eqref{eq:Pi-identity}, this gives
\begin{align*}
 \|SS^*F\|_{L^{10/3}(\R\times\R\times\T^2)}
 &\lesssim_a\left\|\int_I|t-s|^{-3/5}
 \|F(s)\|_{L^{10/7}(\R\times\T^2)}\,\dd s\right\|_{L^{10/3}_t(I)}\\
 &\lesssim_a\|b\|_{L^{10/7}(\R)}
 =\|F\|_{L^{10/7}(I\times\R\times\T^2)},\\
 \|S^*F\|_2^2
 &=\langle SS^*F,F\rangle
 \le\|SS^*F\|_{10/3}\|F\|_{10/7}
 \lesssim_a\|F\|_{10/7}^2,\\
 \|e^{itL_a}P_{\le N}f\|_{L^{10/3}(I\times\R\times\T^2)}
 &=\|S(P_{\le N}f)\|_{10/3}
 \lesssim_a\|P_{\le N}f\|_2
 \le\|f\|_2.
\end{align*}
Thus
\begin{equation}
 \mathcal S_a(N,T)\lesssim_a1\qquad(0<T\le N^{-1}).
 \label{eq:local-upper}
\end{equation}

For the lower bound, we take $N\ge16$, $M\coloneqq N/16$, and a nonnegative $\phi\in C_c^\infty((-1,1))$ with $\|\phi\|_2=1$. Define
\[
 \widehat g_M(\xi)\coloneqq M^{-1/2}\phi(\xi/M),
 \qquad\text{and}\qquad
 h_M(y)\coloneqq\frac{1}{\sqrt{2\pi(2M+1)}}\sum_{k=-M}^Me^{iky}.
\]
For $F_M(x,y)\coloneqq g_M(x)h_M(y_1)h_M(y_2)$, we have
\[
 \|F_M\|_2=1,
 \qquad\supp\widehat F_M\subseteq\{|\zeta|\le\sqrt3M\},
 \qquad\text{and}\qquad P_{\le N}F_M=F_M.
\]
We choose $b_a>0$ small enough that $b_a(1+\max_j|a_j|)<\pi/3$. On
\[
 0\le t\le b_aM^{-2},
 \qquad |x|,|y_1|,|y_2|\le b_aM^{-1},
\]
the phases satisfy
\[
 |Mx\eta-a_1tM^2\eta^2|<\pi/3
 \quad\text{and}\quad
 |ky_j-a_{j+1}tk^2|<\pi/3
 \qquad(|\eta|\le1,\ |k|\le M).
\]
Taking real parts, we obtain
\begin{align*}
 \left|e^{ita_1\partial_x^2}g_M(x)\right|
 &=\frac{M^{1/2}}{\sqrt{2\pi}}
 \left|\int_{-1}^1e^{i(Mx\eta-a_1tM^2\eta^2)}\phi(\eta)\,\dd\eta\right|
 \gtrsim M^{1/2},\\
 \left|e^{ita_{j+1}\partial_{y_j}^2}h_M(y_j)\right|
 &=\frac{\left|\sum_{k=-M}^Me^{i(ky_j-a_{j+1}tk^2)}\right|}
 {\sqrt{2\pi(2M+1)}}
 \gtrsim M^{1/2},\\
 |e^{itL_a}F_M(x,y)|&\gtrsim M^{3/2}.
\end{align*}
With $\tau\coloneqq\min\{T,b_aM^{-2}\}$, it follows that
\begin{align*}
 \|e^{itL_a}P_{\le N}F_M\|_{L^{10/3}([0,T]\times\R\times\T^2)}^{10/3}
 &\ge\int_0^\tau\int_{|x|,|y_1|,|y_2|\le b_aM^{-1}}
 |e^{itL_a}F_M|^{10/3}\,\dd x\dd y\dd t\\
 &\gtrsim_a M^5\tau M^{-3}
 \gtrsim_a\min\{N^2T,1\}.
\end{align*}
We have proved
\begin{equation}
 \mathcal S_a(N,T)\gtrsim_a\min\{(N^2T)^{3/10},1\}
 \qquad(0<T\le1).
 \label{eq:coherent-lower}
\end{equation}
For the remaining frequencies $N\in\{2,4,8\}$, choose
$0<\delta_a<1$ such that
$\delta_a+|a_1|\delta_a^2<\pi/3$, and choose a nonnegative
$\widehat g_a\in C_c^\infty((-\delta_a,\delta_a))$ with $\|g_a\|_2=1$.
Set
\[
 F_a(x,y_1,y_2)\coloneqq(2\pi)^{-1}g_a(x).
\]
Then $\|F_a\|_2=1$, $P_{\le N}F_a=F_a$, and for
$0\le t\le1$ and $|x|\le1$, taking real parts in the Fourier inversion
formula gives $|e^{itL_a}F_a(x,y)|\gtrsim_a1$.  Therefore
\[
 \mathcal S_a(N,T)
 \ge\|e^{itL_a}F_a\|_{L^{10/3}([0,T]\times[-1,1]\times\T^2)}
 \gtrsim_a T^{3/10}
 \gtrsim_a\min\{(N^2T)^{3/10},1\}
 \qquad(0<T\le1).
\]
Combining this with \eqref{eq:bernstein} and \eqref{eq:local-upper} completes the proof.
\end{proof}

\subsection{Proof of the kernel bound}\label{sub:kernel}

\begin{proof}[Proof of \Lemref{lem:kernel}]
We apply van der Corput to the continuous factor. For
$\phi(\xi)\coloneqq v\xi-bt\xi^2$,
\begin{align*}
 |\phi''(\xi)|&=2|bt|,\\
 \|\rho(\cdot/N)^2\|_\infty
 +\int_\R\left|\partial_\xi[\rho(\xi/N)^2]\right|\dd\xi
 &=\|\rho^2\|_\infty+\int_\R|\partial_\eta[\rho(\eta)^2]|\dd\eta
 \lesssim1,\\
 \left|\int_\R e^{i\phi(\xi)}\rho(\xi/N)^2\,\dd\xi\right|
 &\lesssim_b |t|^{-1/2},
\end{align*}
while integration of the amplitude gives the bound $CN$. This proves \eqref{eq:factor-cont}.

For $y\in[-\pi,\pi]$, Poisson summation gives
\[
 \sum_{k\in\Z}e^{i(ky-btk^2)}\rho(k/N)^2=\sum_{\ell\in\Z}I_\ell,
\]
where
\[
 I_\ell\coloneqq\int_\R e^{i((y-2\pi\ell)\xi-bt\xi^2)}\rho(\xi/N)^2\,\dd\xi.
\]
We estimate the terms with $|\ell|\le C_b(1+N|t|)$ by \eqref{eq:factor-cont}, taking $C_b$ sufficiently large. For the remaining terms, we change variables $\xi=N\eta$ and define
\[
 a(\eta)\coloneqq\rho(\eta)^2
 \qquad\text{and}\qquad
 \Phi_\ell(\eta)\coloneqq N(y-2\pi\ell)\eta-btN^2\eta^2.
\]
On $|\eta|\le4$, we have
\begin{align*}
 |\Phi_\ell'(\eta)|
 &=|N(y-2\pi\ell)-2btN^2\eta|
 \gtrsim_b N|\ell|,\\
 |\Phi_\ell''(\eta)|&=2|bt|N^2\lesssim_b N|\ell|,
 \qquad\text{and}\qquad \Phi_\ell'''=0.
\end{align*}
We integrate by parts twice, using the compact support of $a$ to remove the boundary terms, and obtain
\begin{align*}
 |I_\ell|
 &=N\left|\int_\R e^{i\Phi_\ell(\eta)}
 \partial_\eta\left[\frac{1}{i\Phi_\ell'}
 \partial_\eta\left(\frac{a}{i\Phi_\ell'}\right)\right]\dd\eta\right|\\
 &\lesssim N\int_{-4}^4
 \left(\frac{|a''|}{|\Phi_\ell'|^2}
 +\frac{|a'||\Phi_\ell''|}{|\Phi_\ell'|^3}
 +\frac{|a||\Phi_\ell''|^2}{|\Phi_\ell'|^4}\right)\dd\eta\\
 &\lesssim_b\frac{1}{N|\ell|^2}
 \int_{-4}^4(|a''|+|a'|+|a|)\,\dd\eta
 \lesssim_b N^{-1}|\ell|^{-2}.
\end{align*}
Summing these estimates and \eqref{eq:factor-cont}, we obtain
\begin{align*}
 \left|\sum_{k\in\Z}e^{i(ky-btk^2)}\rho(k/N)^2\right|
 &\le\sum_{|\ell|\le C_b(1+N|t|)}|I_\ell|
 +\sum_{|\ell|>C_b(1+N|t|)}|I_\ell|\\
 &\lesssim_b(1+N|t|)|t|^{-1/2}+N^{-1}\sum_{\ell\ne0}|\ell|^{-2}\\
 &\lesssim_b(1+N|t|)|t|^{-1/2}.
\end{align*}
Counting the nonzero terms also gives $CN$, proving \eqref{eq:factor-per}. We multiply the three factors and use $|t|\le N^{-1}$ to obtain
\begin{align*}
 \|K_N(t)\|_\infty
 &\lesssim_a\min\{N,|t|^{-1/2}\}
 \prod_{j=2}^3\min\{N,|t|^{-1/2}(1+N|t|)\}\\
 &\lesssim_a\bigl(\min\{N,|t|^{-1/2}\}\bigr)^3
 =\min\{N^3,|t|^{-3/2}\}.
\end{align*}
\end{proof}

\section{Elliptic comparison estimates}\label{sec:elliptic-proof}

\subsection{Proof of \Thmref{thm:elliptic-main}}\label{sub:transfer}

Although \Thmref{thm:elliptic-main} is already contained in Barron's
general semiperiodic endpoint theory, we record the Bloch--Floquet proof used
here.  Its compact input is the following known estimate.

\begin{lemma}[{\cite[Theorem~2.4 and Remark~2.5]{BourgainDemeter2015}}]\label{lem:compact}
Let $a_j>0$ be fixed. If $g\in L^2(\T^3)$ has Fourier support in $\{|k|\le3N\}$, then, for every $\varepsilon>0$,
\begin{equation}
 \|e^{itL_a}g\|_{L^{10/3}([0,1]\times\T^3)}
 \lesssim_{a,\varepsilon}N^\varepsilon\|g\|_2.
 \label{eq:BD-benchmark}
\end{equation}
\end{lemma}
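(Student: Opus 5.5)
We would not reprove this estimate from scratch. Instead we would record how it follows from the $\ell^2$ decoupling theorem of Bourgain--Demeter for nondegenerate elliptic hypersurfaces, since this is exactly the content of \cite[Theorem~2.4 and Remark~2.5]{BourgainDemeter2015}. The surface is $\{(\eta,Q_a(\eta)):|\eta|\le3\}\subset\R^4$, with $Q_a(\eta)=a_1\eta_1^2+a_2\eta_2^2+a_3\eta_3^2$. Because all $a_j>0$, it has positive definite second fundamental form. In ambient dimension $4$, the critical decoupling exponent is $2\cdot 4/(4-2)\cdot\frac12\cdot\ldots$; concretely it is $2(n+1)/(n-1)=10/3$ with $n=4$ the ambient dimension, which matches the exponent in \eqref{eq:BD-benchmark}. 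Decoupling at this exponent holds with loss $\delta^{-\varepsilon}$.

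\textbf{Step 1: write the solution as an exponential sum.} Let $g=\sum_{|k|\le3N}c_ke^{ik\cdot y}$, so that $e^{itL_a}g=\sum_kc_ke^{i(k\cdot y-tQ_a(k))}$. We then rescale by $y=Nz$ and $t=N^2s$. The frequencies become $\eta=k/N$ in the ball of radius $3$, and the phase becomes $N(z\cdot\eta)\cdot N\ldots$. More precisely, in the new variables $(Y,S)=(y,\ldots)$ we view the sum as a function on $\R^4$ whose Fourier transform is supported in the $N^{-2}$-neighbourhood of the rescaled surface $\{(\eta,Q_a(\eta))\}$, observed on a box of spatial side $\sim N$ and time length $N^2$. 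In the original units this box is $[0,N^2]\times[0,2\pi N]^3$ after scaling $y\mapsto y$ and $t\mapsto N^2t$.

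\textbf{Step 2: enlarge the box using periodicity.} Spatial periodicity in $y$ lets us replace $\T^3$ by a cube of side $\sim N^2$; this multiplies both sides of the estimate by the same volume factor. The time interval $[0,1]$ has rescaled length $N^2$, so we obtain a cube $Q_{N^2}$ of side $N^2=\delta^{-1}$ with $\delta=N^{-2}$. We do not need time periodicity, which fails for irrational $a_j$, precisely because the interval $[0,1]$ already has the right length. This matching is the key point of the whole argument.

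\textbf{Step 3: apply decoupling and compute the caps.} We apply $\ell^2$ decoupling on $Q_{N^2}$ into caps of size $\delta^{1/2}=N^{-1}$. Each cap contains at most $O(1)$ rescaled lattice points $k/N$. Hence each decoupled piece is a single exponential, and its $L^{10/3}$ norm over a weighted cube is exactly $|c_k|\,|Q_{N^2}|^{3/10}$, up to the standard weight $w_Q$ whose tails are harmless because $|c_ke^{i\cdots}|$ is constant. Summing in $\ell^2$ and undoing the normalization gives
\[
\|e^{itL_a}g\|_{L^{10/3}([0,1]\times\T^3)}\lesssim_{a,\varepsilon}N^{\varepsilon}\Bigl(\sum_k|c_k|^2\Bigr)^{1/2}\eqsim N^\varepsilon\|g\|_2 .
\]
The dependence on $a$ enters only through the curvature constants of the surface.

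\textbf{Main obstacle.} We expect the main difficulty to be the bookkeeping in Steps 1--2. One must check that the $N^{-2}$-neighbourhood and the side $N^2$ are correctly matched after rescaling, and that the weight $w_Q$ in the decoupling inequality can be removed. The standard way to remove it is to note that the left side is a sum of translates of the unweighted norm with rapidly decaying coefficients, and to use periodicity in $y$ together with the fact that each translate in $t$ still lies on a time interval of length $O(1)$. That translated interval can be handled by time translation invariance of $e^{itL_a}$ together with unitarity.
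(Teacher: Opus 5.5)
Your argument is correct, but it works one level below the paper. The paper does not prove Lemma~\ref{lem:compact}. It quotes \cite[Theorem~2.4 and Remark~2.5]{BourgainDemeter2015} as a black box (the case $n=4$, $p=10/3$, with Remark~2.5 covering the positive definite form $Q_a$). It only notes that the $2\pi$-normalization of $\T$ and the radius $3N$ change constants.

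You instead re-derive that estimate from the $\ell^2$ decoupling inequality, which is the standard deduction:
\begin{itemize}
\item parabolic rescaling $Y=Ny$, $S=N^2t$;
\item spatial periodicity to fill out a box of side $\sim N^2$;
\item decoupling at $\delta=N^{-2}$ into caps of width $N^{-1}$, each containing $O(1)$ points of $N^{-1}\Z^3$;
\item the observation that $t\in[0,1]$ already rescales to time length $\delta^{-1}$, so no time periodicity is needed for general $a_j>0$.
\end{itemize}
Your route shows why the interval $[0,1]$ is natural and why the constant depends on $a$ only through the curvature of the surface. The paper's route buys brevity, since only normalization needs checking.

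Two points in your write-up should be fixed.
\begin{itemize}
\item In Step~1 the change of variables is written backwards: $y=Nz$, $t=N^2s$ would push the frequencies out to $Nk$. The correct substitution is $Y=Ny$, $S=N^2t$, after which your box $[0,N^2]\times[0,2\pi N]^3$ is right. Since $\widehat F$ lies exactly on the surface, you should also say that you use the local form of decoupling, or multiply by a bump adapted to the box with Fourier support in a $\delta$-ball.
\item Your ``main obstacle'' is not an obstacle. Because $w_Q\gtrsim1$ on $Q$, you always have $\|F\|_{L^{10/3}(Q)}\lesssim\|F\|_{L^{10/3}(w_Q)}$, so no weight has to be removed on the left. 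On the right the weight is harmless because $|F_\theta|\le\sum_{k\in\theta}|c_k|$ pointwise and $\|w_Q\|_1\lesssim|Q|$. The translation-invariance argument you sketch would bound a weighted norm by unweighted ones, which is the opposite direction and is not needed.
\end{itemize}
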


Here we use the $n=4$, $p=10/3$ case of the cited theorem. Remark~2.5 allows the fixed positive definite form $\sum_{j=1}^3a_jk_j^2$. Changing the torus normalization and replacing the frequency radius by $3N$ only changes the constant. We will use this estimate with the same constant on every Bloch fibre.

To apply this estimate on $\R\times\T^2$, we need a transfer that preserves the constant. The next proposition supplies precisely this step.

\begin{proposition}[Bloch transfer]\label{prop:transfer}
Let $2\le r<\infty$ and let $I$ be an interval. Suppose that
\begin{equation}
 \|e^{itL_a}g\|_{L^r(I\times\T^3)}\le C(N,I)\|g\|_2
 \label{eq:transfer-assumption}
\end{equation}
for all $g$ with Fourier support in $\{|\ell|\le3N\}$. Then every $f\in L^2(\R\times\T^2)$ with Fourier support in $\{|\zeta|\le2N\}$ satisfies
\begin{equation}
 \|e^{itL_a}f\|_{L^r(I\times\R\times\T^2)}\le C(N,I)\|f\|_2.
 \label{eq:transfer-conclusion}
\end{equation}
\end{proposition}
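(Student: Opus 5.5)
The plan is a Bloch--Floquet (Zak) decomposition in the Euclidean variable. Write the $x$-frequency as $\xi=n+\theta$ with $n\in\Z$ and $\theta\in[0,1)$. For each $\theta$, define $G_\theta\in L^2(\T^3)$ by prescribing its Fourier coefficients $\widehat{G_\theta}(n,k)\coloneqq\widehat f(n+\theta,k)$, up to the fixed normalization constant coming from $(2\pi)^{-3/2}$. Since $x\mapsto e^{inx}$ is $2\pi$-periodic, Fourier inversion gives
\[
 f(x,y)=c\int_0^1 e^{i\theta x}G_\theta(x,y)\,\dd\theta ,
\]
and Plancherel in $\xi=n+\theta$ gives $\int_0^1\|G_\theta\|_{L^2(\T^3)}^2\dd\theta=c'\|f\|_2^2$. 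I will fix the constants $c,c'$ once at the start, so that the final inequality holds with constant exactly $C(N,I)$.

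\textbf{Fibre support.} If $(n+\theta)^2+|k|^2\le4N^2$, then $|(n,k)|\le2N+1\le3N$ for $N\ge2$ (for $N=1$ one would also need to check this, but $N$ is dyadic $\ge2$ throughout). Hence every fibre $G_\theta$ is admissible in \eqref{eq:transfer-assumption}.

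\textbf{Evolution of a fibre.} Expand $Q_a(n+\theta,k)=Q_a(n,k)+2a_1n\theta+a_1\theta^2$. The middle term combines with $e^{inx}$ to give $e^{in(x-2a_1\theta t)}$, so
\[
 e^{itL_a}f(t,x,y)=c\int_0^1 e^{i\theta x-ia_1\theta^2t}\,(e^{itL_a}G_\theta)(t,x-2a_1\theta t,y)\,\dd\theta
 \eqqcolon c\int_0^1 e^{i\theta x}h_\theta(t,x,y)\,\dd\theta ,
\]
where $h_\theta$ is $2\pi$-periodic in $x$. This Galilean shift is where I must be careful with signs and normalizations. It is harmless because, for each fixed $t$, translation in $x$ on $\T$ preserves $L^r(\T^3)$ norms. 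Consequently $\|h_\theta\|_{L^r(I\times\T^3)}=\|e^{itL_a}G_\theta\|_{L^r(I\times\T^3)}\le C(N,I)\|G_\theta\|_2$.

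\textbf{Summing over periods.} Write $x=x_0+2\pi m$ with $x_0\in[0,2\pi)$ and $m\in\Z$. Then $e^{i\theta x}h_\theta=e^{i\theta x_0}e^{2\pi im\theta}h_\theta(t,x_0,y)$. For fixed $(t,x_0,y)$, the quantity $\int_0^1e^{2\pi i m\theta}e^{i\theta x_0}h_\theta\,\dd\theta$ is the $m$-th Fourier coefficient of a function on $[0,1)$. Since $r\ge2$, I use $\ell^r\subset\ell^2$ and then Parseval:
\[
 \sum_m\Bigl|\int_0^1 e^{2\pi im\theta}e^{i\theta x_0}h_\theta\,\dd\theta\Bigr|^r\le\Bigl(\int_0^1|h_\theta(t,x_0,y)|^2\dd\theta\Bigr)^{r/2}.
\]
Integrating over $(t,x_0,y)\in I\times\T^3$ and applying Minkowski's inequality in $L^{r/2}$ (valid since $r/2\ge1$) gives
\[
 \|e^{itL_a}f\|_{L^r}^r\le |c|^r\Bigl(\int_0^1\|h_\theta\|_{L^r(I\times\T^3)}^2\dd\theta\Bigr)^{r/2}\le |c|^rC(N,I)^r\Bigl(\int_0^1\|G_\theta\|_2^2\dd\theta\Bigr)^{r/2}.
\]
By the Plancherel identity above, the right-hand side equals $C(N,I)^r\|f\|_2^r$ once the constants are matched. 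Hence $|c|^2c'=1$, which I will verify directly from the chosen normalizations. This yields \eqref{eq:transfer-conclusion}.

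I expect the main obstacle to be purely bookkeeping rather than analysis. One point is confirming that the normalization constants cancel exactly, so that no extra factor appears in front of $C(N,I)$. The other is checking that the Galilean phase and shift leave each fibre norm invariant. A density argument (Schwartz-in-$x$ data) justifies the interchanges of integrals.
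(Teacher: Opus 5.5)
Your proposal is correct and is essentially the paper's proof: your fibre $G_\theta$ with $\widehat{G_\theta}(n,k)=\widehat f(n+\theta,k)$ is the paper's Zak transform $g_\theta$ (the paper builds it by periodizing in $x$ and then computes these Fourier coefficients), your $h_\theta$ is its Galilean-twisted $w_\theta$, and you close with the same chain of $\ell^2\subset\ell^r$, Parseval in $\theta$, and Minkowski in $L^{r/2}$; with the paper's $(2\pi)^{-3/2}$ normalization one gets $c=c'=1$. Two small slips: the embedding you use is $\ell^2\subset\ell^r$, not $\ell^r\subset\ell^2$ (your displayed inequality is the right one), and $2N+1\le 3N$ already holds for every $N\ge1$.
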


\begin{proof}[Proof of \Thmref{thm:elliptic-main}]
For $a_j>0$, \Lemref{lem:compact} and \Propref{prop:transfer} give
\begin{align*}
 \|e^{itL_a}P_{\le N}f\|_{L^{10/3}([0,1]\times\R\times\T^2)}
 &\lesssim_{a,\varepsilon}N^\varepsilon\|P_{\le N}f\|_2
 \le N^\varepsilon\|f\|_2.
\end{align*}
For $a_j<0$, we use the reality and evenness of the cutoff to write
\[
 e^{itL_a}P_{\le N}f
 =\overline{e^{itL_{-a}}P_{\le N}\overline f}.
\]
The positive-coefficient estimate now proves the assertion.
\end{proof}

\subsection{Proof of the Bloch transfer}\label{sub:transfer-proof}

We write the continuous frequency as $\xi=n+\theta$, where $n\in\Z$ and $0\le\theta<1$. This identifies the compact estimate used above on each fibre.

\begin{proof}[Proof of \Propref{prop:transfer}]
We first take $f$ to be Schwartz with the stated support. Let $B\coloneqq[0,2\pi)$. For $x\in B$ and $\theta\in[0,1)$, define
\begin{equation}
 g_\theta(x,y)\coloneqq e^{-ix\theta}
 \sum_{j\in\Z}e^{-2\pi ij\theta}f(x+2\pi j,y).
 \label{eq:bloch}
\end{equation}
The sum converges with all derivatives. We replace $j+1$ by $m$ to obtain
\begin{align*}
 g_\theta(x+2\pi,y)
 &=e^{-i(x+2\pi)\theta}\sum_{j\in\Z}
 e^{-2\pi ij\theta}f(x+2\pi(j+1),y)\\
 &=e^{-ix\theta}\sum_{m\in\Z}e^{-2\pi im\theta}f(x+2\pi m,y)
 =g_\theta(x,y).
\end{align*}
By Parseval in $\theta$,
\[
 \int_0^1|g_\theta(x,y)|^2\,\dd\theta
 =\sum_{j\in\Z}|f(x+2\pi j,y)|^2.
\]
Integrating over $B\times\T^2$ gives
\begin{align}
 \int_0^1\|g_\theta\|_{L^2(\T^3)}^2\,\dd\theta
 &=\int_B\int_{\T^2}\sum_{j\in\Z}|f(x+2\pi j,y)|^2\,\dd y\dd x\notag\\
 &=\int_\R\int_{\T^2}|f(x,y)|^2\,\dd y\dd x
 =\|f\|_{L^2(\R\times\T^2)}^2.
 \label{eq:bloch-isometry}
\end{align}
We change variables $v=x+2\pi j$ in the Fourier coefficient. Since $e^{2\pi ijn}=1$,
\begin{align}
 \widehat g_\theta(n,k)
 &=(2\pi)^{-3/2}\sum_{j\in\Z}\int_B\int_{\T^2}
 f(x+2\pi j,y)e^{-i(x(n+\theta)+y\cdot k+2\pi j\theta)}\,\dd y\dd x\notag\\
 &=(2\pi)^{-3/2}\int_\R\int_{\T^2}
 f(v,y)e^{-i(v(n+\theta)+y\cdot k)}\,\dd y\dd v
 =\widehat f(n+\theta,k).
 \label{eq:bloch-frequency}
\end{align}
Thus
\[
 \widehat g_\theta(n,k)\ne0
 \quad\Longrightarrow\quad
 |(n,k)|\le|(n+\theta,k)|+\theta\le2N+1\le3N.
\]

Let $u\coloneqq e^{itL_a}f$, and define $w_\theta$ from $u$ by \eqref{eq:bloch}. Expanding the quadratic phase and taking the Fourier series, we find
\begin{align}
 \widehat w_\theta(t,n,k)
 &=e^{-it[a_1(n+\theta)^2+a_2k_1^2+a_3k_2^2]}\widehat g_\theta(n,k)\notag\\
 &=e^{-ita_1\theta^2}e^{-2ita_1n\theta}
 e^{-it[a_1n^2+a_2k_1^2+a_3k_2^2]}\widehat g_\theta(n,k)\notag,\\
 w_\theta(t,x,y)&=e^{-ita_1\theta^2}
 (e^{itL_a}g_\theta)(x-2ta_1\theta,y).
 \label{eq:twist}
\end{align}
Translation preserves torus measure, and the scalar factor has modulus one. By \eqref{eq:transfer-assumption},
\[
 \|w_\theta\|_{L^r(I\times\T^3)}
 =\|e^{itL_a}g_\theta\|_{L^r(I\times\T^3)}
 \le C(N,I)\|g_\theta\|_2.
\]
For $r\ge2$, the pointwise inclusion $\ell^2\subseteq\ell^r$ gives
\[
 \left(\sum_j|u_j|^r\right)^{2/r}\le\sum_j|u_j|^2.
\]
We apply this inequality, Parseval in the Bloch parameter, and then Minkowski
to obtain
\begin{align*}
 \|u\|_{L^r(I\times\R\times\T^2)}^2
 &=\left(\int_I\int_B\int_{\T^2}
 \sum_j|u(t,x+2\pi j,y)|^r\,\dd y\dd x\dd t\right)^{2/r}\\
 &\le\left\|\sum_j|u(t,x+2\pi j,y)|^2\right\|_{L^{r/2}(I\times\T^3)}\\
 &=\left\|\int_0^1|w_\theta(t,x,y)|^2\,\dd\theta\right\|_{L^{r/2}(I\times\T^3)}\\
 &\le\int_0^1\|w_\theta\|_{L^r(I\times\T^3)}^2\,\dd\theta\\
 &\le C(N,I)^2\int_0^1\|g_\theta\|_2^2\,\dd\theta
 =C(N,I)^2\|f\|_2^2.
\end{align*}
Approximation by Schwartz data with the same support bound extends this estimate to $L^2$. Unitarity identifies the limit with $e^{itL_a}f$.
\end{proof}

\subsection{A short-window refinement}\label{sub:elliptic-short}

We now combine the Bloch--Floquet transfer with Quinn's lossless critical
estimate on rectangular tori \cite[Theorem~1.1]{Quinn2026}.  This avoids
reproducing the multiscale argument from \cite{Quinn2026}. Only the conversion
between the fixed positive coefficients and a rectangular torus is needed.

\begin{proof}[Proof of \Corref{cor:elliptic-short}]
By conjugation we may assume $a_j>0$.  In view of
\eqref{eq:bernstein}, \eqref{eq:coherent-lower}, and
\Propref{prop:transfer}, it is enough to prove the compact lossless
estimate
\begin{equation}
 \|e^{itL_a}g\|_{L^{10/3}([0,N^{-4/5-\eta}]\times\T^3)}
 \lesssim_{a,\eta}\|g\|_2
 \label{eq:cap-compact}
\end{equation}
whenever $\supp\widehat g\subseteq\{|k|\le3N\}$.

Set
\[
 A_*\coloneqq\max_{1\le j\le3}a_j,\qquad
 \theta_j\coloneqq\frac{a_j}{A_*},\qquad
 \alpha_j\coloneqq\theta_j^{-1/2}.
\]
After the fixed time change $t'=A_*t$, the phase becomes
\[
 t\sum_{j=1}^3a_jk_j^2
 =t'\sum_{j=1}^3\frac{k_j^2}{\alpha_j^2}.
\]
Under the coordinate change $z_j=\alpha_jx_j$, this is the standard
Schr\"odinger phase on the rectangular torus
\[
 \T^3_\alpha
 \coloneqq
 \prod_{j=1}^3\bigl(\R/(2\pi\alpha_j\Z)\bigr).
\]
The spatial change of variables modifies the $L^2$ and spacetime
$L^{10/3}$ norms only by constants depending on $a$.

We apply \cite[Theorem~1.1]{Quinn2026} on $\T^3_\alpha$.  In Quinn's notation
the spacetime dimension is $n=4$, and hence
\[
 q_c=\frac{2(n+1)}{n-1}=\frac{10}{3}.
\]
Let $G(z)\coloneqq g(z_1/\alpha_1,z_2/\alpha_2,z_3/\alpha_3)$ and
$P_\alpha\coloneqq\sqrt{-\Delta_{\T^3_\alpha}}$. Since $\alpha_j\ge1$,
$G$ has spectral support in $[0,3N]$. Take $\lambda=4N$ and a real even
$\psi\in C_c^\infty(\R)$ which equals one on $[-1,1]$ and is supported
in $[-2,2]$. Then $\psi(P_\alpha/\lambda)G=G$, as required by the cited
theorem. The sign of the propagator is immaterial by complex conjugation. Set
\[
 \varepsilon=\frac{\eta}{4},
 \qquad
 \delta=\lambda^{-1/5+\eta/2}.
\]
For all sufficiently large $N$,
\[
 \delta\ge\lambda^{-1/5+\varepsilon},
\]
and Quinn's theorem gives a lossless estimate on a time interval of length
\[
 (\lambda\delta)^{-1}
 =\lambda^{-4/5-\eta/2}.
\]
After undoing the fixed time rescaling, this interval contains
$[0,N^{-4/5-\eta}]$ for all sufficiently large $N$, since
$A_*N^{-4/5-\eta}/\lambda^{-4/5-\eta/2}\to0$.  Hence \eqref{eq:cap-compact} follows from
\cite[Theorem~1.1 and Section~9]{Quinn2026}. The finitely many remaining
frequencies are absorbed into the constant.

\Propref{prop:transfer} now yields the waveguide lossless upper bound
for $N^{-2}\le T\le N^{-4/5-\eta}$.  For $0<T\le N^{-2}$,
\eqref{eq:bernstein} gives the upper bound
$(N^2T)^{3/10}$, while \eqref{eq:coherent-lower} gives the matching lower
bound.  The same lower bound equals a positive constant when
$N^{-2}\le T\le N^{-4/5-\eta}$.  This proves
\eqref{eq:elliptic-short}.
\end{proof}

\medskip
\noindent{\bf AI disclosure.}
We used ChatGPT and Codex to discuss proof strategies, draft and check parts of the arguments, organize references, and improve the clarity and readability of the exposition. The authors take responsibility for the content of this manuscript.

\medskip
\noindent{\bf Funding }
Not applicable.
\medskip

\noindent{\bf Data Availability} This manuscript has no associated data. 

\medskip
\noindent{\bf\Large Declarations}
\medskip

\noindent\textbf{Conflicts of Interest}\space The authors declare no conflicts of interest.

\end{document}